\date{}
\newtheorem{theo}{Theorem}
\newtheorem{prop}[theo]{Proposition}
\newtheorem{cor}[theo]{Corollary}
\newtheorem{defi}{Definition}
\newtheorem{rem}{Remark}
 \newtheorem{nota}[theo]{Notation}
\newcommand{\beq}{\begin{equation}}
\newcommand{\eeq}{\end{equation}}
\newcommand{\h}{{\mathbb{H}}}
\newcommand{\Z}{{\mathbb{Z}}}
\newcommand{\R}{{\mathbb{R}}}
\newcommand{\N}{{\mathbb{N}}}
\newcommand{\type}{\varepsilon}
\begin{document}

\title[The Borsuk-Ulam theorem,  
Nil  geometry]{The Borsuk-Ulam theorem for 
closed 3-dimensional  manifolds having   Nil geometry}
\footnote{The second author is partially supported by Projeto Tematico-FAPESP 
Topologia Algebrica Geometrica e Diferencial 2016/24707-4.}

\author{A. Bauval, D.  L. Gon\c calves, C. Hayat} 

%\today

\begin{abstract}

\noindent

\emph{Let $M$ be a closed, connected $3$-manifold which admits  Nil geometry,  we determine  all  free involutions $\tau$ on $M $ and  the
  Borsuk-Ulam index of $(M,\tau) $. } 

\end{abstract}

\maketitle

\section{Introduction}

The theorem now known as the Borsuk-Ulam theorem seems to have first appeared in a paper by Lyusternik and
Schnirel'man \cite{ls} in 1930, then in a paper by Borsuk \cite{bo} in 1933 (where a footnote mentions that the theorem was posed as a conjecture by S. Ulam). One of the most familiar statements
(Borsuk's Satz II) is that for any continuous map $f\colon S^n\to\R^n,$ \ there exists a point
\ $x \in S^n$ \ such that \ $f(x) = f(-x)$. The theorem has many equivalent forms and generalizations,
an obvious one being to replace $S^n$ and its antipodal involution \  $\tau(x) = -x$\ by 
any finite dimensional
CW-complex $X$ equipped with some fixed point free involution $\tau$, and ask whether \ $f(x) = f(\tau(x))$ \ 
must hold for some \ $x \in X$.\  The original theorem and its generalizations have many applications
in topology, and also -- since Lov\'asz's \cite{lov} and B\'ar\'any's \cite{aa} pioneering work in 1978 -- in combinatorics and graph theory. An excellent general reference is Matou\v sek's book \cite{mat}. For more examples of new applications, see \cite{gro} or \cite{mem}.
 
 In recent years, the following generalization of the question raised by Ulam has been studied (e.g. \cite{gon}, \cite{ghz}, \cite{GonGua}, \cite
{mus}, \cite{adcp1}, \cite{CGLP}) for many families of pairs $(M, \tau)$, where $\tau$ is a free involution on the space $M$:\\
\indent {\it Given $(M, \tau)$, determine all positive integers $n$ such that}\\
\indent {\it for every map\ $f\colon M \to \R^n$,\  there  is an $x\in M$ for which $f(x)=f(\tau(x))$}.\\
When $n$ belongs to this family, we say that {\it  the pair $(M, \tau)$ has the Borsuk-Ulam property with respect to  maps into $\R^n$.} If $M$ is a closed connected manifold, the greatest such integer $n$ -- which is at least $1$ (\cite{mat}, \cite{yang}) and at most the dimension of $M$ (\cite{ghz}) -- is called {\it the $\Z_2$-index of the pair $(M, \tau)$.}

In this work, we compute this index when $M$ is any of the closed connected $3$-dimensional manifolds having Nil geometry \cite{scott}.
  Our main result consists in a list of seven Propositions and six Theorems. 
 The propositions give a complete list (up to a certain equivalence) of all double coverings $M\to N$ of each manifold $N$ with Nil geometry,
  and the theorems give the $\Z_2$-index of the corresponding pairs $(M, \tau)$.
 
    This work contains four sections besides this one. In Section \ref{sec:Manifolds with Nil geometry}, we give some preliminaries and the list of manifolds with Nil geometry. 
  In Section \ref{sec:Double coverings of each manifold},  we specify the notion of equivalence of two coverings and  in seven propositions we list all equivalence classes of double coverings of manifolds
with Nil geometry. In Section \ref{sec:BU for each 2-covering}, we recall the results about the Borsuk-Ulam property and apply them to manifolds with Nil geometry. In Section \ref{sec:Free involution and 2-index}, in six theorems, we list(up to equivalence) all free involutions
on manifolds with Nil geometry, and their $\Z_2$-index.

\section{Manifolds with Nil geometry}\label{sec:Manifolds with Nil geometry}
The ($3$-dimensional, closed, connected) manifolds having Nil geometry are known (see for instance \cite{Dekimpe}). Any such manifold $M$ is orientable and is the total space of a Seifert bundle $M\to S$, which is unique once an orientation of $M$ is chosen (\cite{scott}) and is characterised (\cite s, \cite{orlik}) by an invariant of the form:
$$\left(b,\type, g',a_1,b_1,\dots ,a_n,b_n\right)$$
where
\begin{itemize}
\item $b$ is an integer,
\item $\type=+1$ if the surface $S$ is orientable and $\type=-1$ if it isn't,
\item $g'$ is the rank of $H_1(S)$ (i.e. $g'=\frac{\type+3}2g$, $g\in\N$ being the genus of $S$, $g\ge1$ if $\varepsilon=-1$),
\item $n\in\N$ and for each $i\in\{1,\dots,n\}$, the integers $a_i,b_i$ describing a critical fibre are coprime and $0<b_i<a_i$.
\end{itemize}

Moreover (\cite[p. 469]{scott}):
\begin{itemize}
\item the Euler orbifold characteristic $\chi(M)$ is zero:\\
$0=\chi(M):=\chi(S)-\sum\left(1-\frac1{a_i}\right)$, i.e. $$\sum\left(1-\frac1{a_i}\right)=2-g';$$
\item the circle bundle Euler number $e(M)$ is non-zero, hence strictly positive for one of the two  orientations: $0<e(M):=b+\sum\frac{b_i}{a_i}$, i.e.
$$b\ge b_{\text{min}}:=-\left\lceil\sum\frac{b_i}{a_i}\right\rceil+1.$$
\end{itemize}
Conversely, for any orientable Seifert manifold with invariants satisfying these two conditions, the geometry is Nil (\cite[p. 441]{scott}).

These conditions leaving very few possibilities, we recover directly (in a different order) Dekimpe's list of all (closed, connected) $3$-manifolds having Nil geometry \cite[Theorem 6.5.5, Chapter 6, p. 154]{Dekimpe}. For later use, we add to the table two columns computing the integers $c,d$ defined as follows:

\begin{defi}\label{defi:cd}For any orientable Seifert manifold $M$ with invariant
$$\left(b,\type, g',a_1,b_1,\dots ,a_n,b_n\right),$$
we define two integers $c,d$ by:
\begin{itemize}
\item $d$ is the number of indices $i$ such that $a_i$ is even;
\item $c=e(M)a=ba+\sum b_i\frac a{a_i}$, where $a$ denotes the least commun  multiple of the $a_i$'s.
\end{itemize}
\end{defi}

\medskip

\begin{center}
\begin{tabular}{|c|c|c|c|c||c||c|c|
}
\hline
\multirow2*{$g'$}&\multirow2*{$\varepsilon$}&$a_1,\dots$&$b_1,\dots$&\multirow2*{$b_{\text{min}}$}&numbering&\multirow2*{$c$}&\multirow2*{\ $d$\ }
\\
&&$\dots,a_n$&$\dots,b_n$&&&&
\\
\hline\hline
$2$&$+1$&$\varnothing$&$\varnothing$&$1$&$\mathrm{M_T}(b)$&$b$&$0$
\\
\hline
$2$&$-1$&$\varnothing$&$\varnothing$&$1$&$\mathrm{M_K}(b)$&$b$&$0$
\\
\hline
$1$&$-1$&$2,2$&1,1&$0$&$\mathrm{M_\mathrm{22}}(b)$&$2b+2$&$2$
\\
\hline
$0$&$+1$&$2,2,2,2$&$1,1,1,1$&$-1$&$\mathrm{M_\mathrm{2222}}(b)$&$2b+4$&$4$
\\
\hline
\multirow4*{$0$}&\multirow4*{$+1$}&\multirow4*{$2,3,6$}&$1,1,1$&$0$&\multirow4*{$\mathrm{M_\mathrm{236}}(b,b_2,b_3)$}&$6b+6$&\multirow4*{$2$}
\\
&&&$1,1,5$&$-1$&&$6b+10$&
\\
&&&$1,2,5$&$-1$&&$6b+12$&
\\
\hline
&&&$1,1,1$&$0$&&$4b+4$&
\\
$0$&$+1$&$2,4,4$&$1,1,3$&$-1$&$\mathrm{M_\mathrm{244}}(b,b_2,b_3)$&$4b+6$&$3$
\\
&&&$1,3,3$&$-1$&&$4b+8$&
\\
\hline
\multirow4*{$0$}&\multirow4*{$+1$}&\multirow4*{$3,3,3$}&$1,1,1$&$0$&\multirow4*{$\mathrm{M_\mathrm{333}}(b,b_1,b_2,b_3)$}&$3b+3$&\multirow4*{$0$}
\\
&&&$1,1,2$&$-1$&&$3b+4$&
\\
&&&$1,2,2$&$-1$&&$3b+5$&
\\
&&&$2,2,2$&$-1$&&$3b+6$&
\\
\hline
\end{tabular}
\end{center}

\medskip

All these manifolds are sufficiently large (\cite[p. 96]{Jaco}). If $M$ is one of them, with invariant $\left(b,\type, g',a_1,b_1,\dots ,a_n,b_n\right)$, a presentation of its fundamental group is:
$$
\pi_1(M)=\left\langle\begin{matrix}s_1,\ldots,s_n\\v_1,\ldots,v_{g'}\\h\end{matrix}\left|
\begin{matrix}
 [s_i,h]\quad\text{and}\quad s_i^{a_i}h^{b_i},&1\le i\le n\\
v_jhv_j^{-1}h^{-\type},&1\le j \le g'\\
s_1\ldots s_nVh^{-b}&\end{matrix}\right.\right\rangle,
$$
where $V=[v_1,v_2]\ldots[v_{2g-1},v_{2g}]$ if $\varepsilon=+1$ and $V=v_1^2\ldots v_g^2$ if $\varepsilon=-1$.

\begin{rem}\label{rem:multiset}In the Seifert invariant, the traditional ordered list (of ordered pairs) $\left((a_1,b_1),\dots,(a_n,b_n)\right)$  should in fact -- due to its geometrical origin -- be considered as a {\rm multiset} $\{(a_1,b_1),\dots,(a_n,b_n)\}$ (one may check that any reordering of the list of triples $(s_i,a_i,b_i)$'s gives a presentation of the same group).
\end{rem}

The first homology group (with integral coefficients) is easily computed, as the abelianization of the fundamental group. Let us first mention a general remark about orientable Seifert manifolds, and soon after, make it more precise for those having Nil geometry. Homology with coefficients in $\Z_2$ would be easier to compute, and sufficient for Propositions \ref{prop:M1Cover} to \ref{prop:M6Cover}, but integral homology will be needed in Corollary \ref{cor:Ind1} of the next section.

\begin{prop}\label{prop:H1Orientable}The first homology group $H_1(M)$ of an orientable Seifert manifold with invariant $\left(b,\varepsilon,g',\dots\right)$ is the direct sum of its torsion subgroup and of the free abelian subgroup $\sum_{j=1}^{g'}v_j\Z$ if $\varepsilon=+1$, $\sum_{j=1}^{g'-1}v_j\Z$ if $\varepsilon=-1$.
\end{prop}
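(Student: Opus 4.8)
The plan is to compute $H_1(M)$ directly as the abelianization of the presentation of $\pi_1(M)$ recalled above, treating the $v_j$ as distinguished generators. Writing the group law additively, the relator $[s_i,h]$ abelianizes to $0$, the relator $s_i^{a_i}h^{b_i}$ to $a_is_i+b_ih$, the relator $v_jhv_j^{-1}h^{-\varepsilon}$ to $(1-\varepsilon)h$, and the long relator $s_1\cdots s_nVh^{-b}$ to $s_1+\cdots+s_n+\overline V-bh$, where $\overline V=0$ when $\varepsilon=+1$ (a product of commutators) and $\overline V=2(v_1+\cdots+v_{g'})$ when $\varepsilon=-1$ (recall $g'=g$ in that case). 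In particular the $g'$ relators $v_jhv_j^{-1}h^{-\varepsilon}$ collapse to the single relation $2h=0$ when $\varepsilon=-1$ and disappear when $\varepsilon=+1$. By Remark~\ref{rem:multiset} the outcome does not depend on the ordering of the triples $(s_i,a_i,b_i)$, so this bookkeeping loses no generality.

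When $\varepsilon=+1$, none of $v_1,\dots,v_{g'}$ occurs in any abelianized relation, so $H_1(M)=\bigl(\sum_{j=1}^{g'}v_j\Z\bigr)\oplus A$, where $A$ is the abelian group on $s_1,\dots,s_n,h$ with relations $a_is_i+b_ih$ $(1\le i\le n)$ and $s_1+\cdots+s_n-bh$. The first summand is free abelian of rank $g'$, so it suffices to check that $A$ is finite: then $A$ is exactly the torsion subgroup of $H_1(M)$ and $\sum_jv_j\Z$ is the announced complement. But $A$ is presented by the $(n+1)\times(n+1)$ integer matrix with rows $(0,\dots,a_i,\dots,0,b_i)$ for $1\le i\le n$ and $(1,\dots,1,-b)$; expanding the determinant along the last column gives $\pm\bigl(\prod_ia_i\bigr)\bigl(b+\sum_ib_i/a_i\bigr)=\pm\bigl(\prod_ia_i\bigr)e(M)$, which is nonzero since $e(M)\neq0$, so $A$ is finite.

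When $\varepsilon=-1$, I first replace the generator $v_g$ by $w:=v_1+\cdots+v_g$ (equivalently $v_g=w-v_1-\cdots-v_{g-1}$); this is a unimodular change among the $v$'s, so the presentation is merely rewritten. After it, $v_1,\dots,v_{g-1}$ occur in no relation, hence $H_1(M)=\bigl(\sum_{j=1}^{g'-1}v_j\Z\bigr)\oplus B$, with $B$ the abelian group on $s_1,\dots,s_n,w,h$ and relations $a_is_i+b_ih$, $2h$, and $s_1+\cdots+s_n+2w-bh$. Here $\sum_{j=1}^{g'-1}v_j\Z$ is free of rank $g'-1$, and $B$ is finite because its $(n+2)\times(n+2)$ presentation matrix has determinant $\pm4\prod_ia_i\neq0$ (expand first along the row of the relation $2h$, then along the column of $w$). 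As before this yields the claimed decomposition.

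The computation is routine; the two points that need care are the passage to the generator $w$ when $\varepsilon=-1$ --- so that the superfluous $v_j$'s become literally absent from the relations rather than merely redundant --- and the nonvanishing of the relevant square determinant. In the $\varepsilon=+1$ case the latter is exactly the condition $e(M)\neq0$; it holds for all the manifolds considered here (indeed for any Seifert invariant normalized by $b\ge b_{\text{min}}$), but it is genuinely needed, since for an orientable circle bundle over a genus-$(g'/2)$ surface with $e(M)=0$ the free rank of $H_1$ is $g'+1$ rather than $g'$.
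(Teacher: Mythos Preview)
Your proof is correct and follows essentially the same route as the paper: abelianize the presentation, split off the $v_j$'s (with the change of generator $w=\sum_j v_j$ when $\varepsilon=-1$), and verify that the subgroup generated by $h$, the $s_i$'s (and $w$) is finite. The only cosmetic difference is that you certify finiteness via the determinant of the square presentation matrix ($\pm(\prod_i a_i)\,e(M)$, resp.\ $\pm4\prod_i a_i$), whereas the paper instead exhibits the explicit annihilator $c=a\,e(M)$ of $h$ and deduces $(ca_i)s_i=0$; both arguments rest on the same hypothesis $e(M)\neq0$.
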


\begin{proof}
$H_1(M)$ is the abelian group with (commuting) generators
$$s_1,\ldots,s_n,v_1,\ldots,v_{g'},h$$
satisfying certain relations.
\begin{itemize}
\item  If $\varepsilon=+1$, the relations are
$a_is_i=-b_ih$ (for $1\le i\le n$) and $bh=\sum s_i$. They imply (for $a$ and $c$ as in Definition \ref{defi:cd}) $abh=\sum\frac a{a_i}(-b_ih)$ hence $ch=0$, which, in turn, entails $(ca_i)s_i=-b_i(ch)=0$, hence the subgroup generated by $h,s_1,\ldots,s_n$ is torsion.
\item  If $\varepsilon=-1$, the relations are
$a_is_i=-b_ih$ (for $1\le i\le n$),  $2h=0$, and $2\sum v_j=bh-\sum s_i$, hence -- similarly -- the subgroup generated by $h,s_1,\ldots,s_n$ and $\sum_{j=1}^{g'}v_j$ is torsion.
\end{itemize}
\end{proof}

\begin{prop}\label{prop:H1Nil}The first homology groups of manifolds with Nil geometry are the following.
\begin{itemize}
\item$H_1(M_{\mathrm{T}}(b))=v_1\Z\oplus v_2\Z\oplus h\Z_c$ ($c=b$).
\item$H_1(M_\mathrm{K}(b))=\begin{cases}v_1\Z\oplus(v_1+v_2)\Z_4\text{with }h=2(v_1+v_2)&\text{if }b\text{ is odd}\\v_1\Z\oplus(v_1+v_2)\Z_2\oplus h\Z_2&\text{if }b\text{ is even.}\end{cases}$
\item$H_1(M_\mathrm{22}(b))=v_1\Z_4\oplus s_1\Z_4$ with $h=-2s_1$ and $s_2=-(2b+1)s_1-2v_1$
\item$H_1(M_\mathrm{2222}(b))=(s_2-s_1)\Z_2\oplus(s_3-s_1)\Z_2\oplus s_1\Z_{2c}$ ($c=2b+4$)\\
with $h=-2s_1$ and $s_4=-(2b+1)s_1-s_2-s_3$.
\item$H_1\left(M_\mathrm{236}(b,b_2,b_3)\right)_{(b_2\in\{1,2\},b_3\in\{1,5\})}=(s_2-s_1)\Z_{6c}$ ($c=6b+3+2b_2+b_3$)\\
with $s_1=3(2b_2-3)(s_2-s_1)$, $h=-2s_1$and $s_3=-(2b+1)s_1-s_2$.
\item$H_1\left(M_\mathrm{244}(b,b_2,b_3)\right)_{(b_2,b_3\in\{1,3\})}=\Z_2\oplus\Z_{4c}$ ($c=4b+2+b_2+b_3$)\\
with $s_1=(1,2b_2-4)$, $s_2=s_1+(0,1)$, $h=-2s_1$ and $s_3=-(2b+1)s_1-s_2$.
\item$H_1\left(M_\mathrm{333}(b,b_1,b_2,b_3)\right)_{(b_i\in\{1,2\})}=\Z_3\oplus\Z_{3c}$ ($c=3b+b_1+b_2+b_3$)\\
with $h=(0,3)$, $s_1=(0,-b_1)$, $s_2=(-b_2,-b_2)$ and $s_3=bh-s_1-s_2$.
\end{itemize}
\end{prop}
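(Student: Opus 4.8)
The plan is to compute $H_1(M)$ for each of the seven families separately, starting from the presentation of $\pi_1(M)$ given in the excerpt, abelianizing, and then performing a Smith-normal-form type reduction of the resulting abelian group presentation. Proposition \ref{prop:H1Orientable} already isolates the free part (coming from the $v_j$'s, with one fewer generator when $\varepsilon=-1$), so the real work is identifying the torsion subgroup and a convenient set of generators for it. In each case the abelianized relations are $a_is_i = -b_ih$ for $1\le i\le n$, the relation $2h=0$ (if $\varepsilon=-1$) or no such relation (if $\varepsilon=+1$), and the "long" relation $\sum s_i = bh - (\text{contribution of }V)$, where $V$ abelianizes to $0$ if $\varepsilon=+1$ and to $2\sum v_j$ if $\varepsilon=-1$.

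First I would handle the two "smooth" cases $M_{\mathrm T}(b)$ and $M_{\mathrm K}(b)$, where $n=0$: here there are no $s_i$, and the group is generated by $v_1,v_2,h$ with relation $bh=0$ (orientable case, giving $v_1\Z\oplus v_2\Z\oplus h\Z_c$ with $c=b$) or by $v_1,v_2,h$ with $2h=0$ and $2(v_1+v_2)=bh$ (non-orientable case); in the latter, change basis to $v_1,\ w:=v_1+v_2,\ h$, so that $2w=bh$ and $2h=0$, and split into the parity cases for $b$ — if $b$ is odd one gets $w$ of order $4$ with $h=2w$, if $b$ is even one gets $w\Z_2\oplus h\Z_2$. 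Then for each of the remaining five families I would use the relations $a_is_i=-b_ih$ to eliminate $h$ in favour of one distinguished $s_i$ (using that some $a_i=2$ or the $\gcd$ conditions make $h$ a multiple of an $s_i$), substitute into the long relation to eliminate one further $s_i$, and read off the torsion invariants; the integer $c=e(M)a$ from Definition \ref{defi:cd} will emerge precisely as the order of the cyclic summand carrying $h$, after multiplying the relation $abh=\sum\frac{a}{a_i}(-b_ih)$ through. The small finite list of admissible $(a_i,b_i)$ in the table (e.g. $(2,2,2,2)$, $(2,3,6)$, $(3,3,3)$, etc.) means each family only has a handful of sub-cases, indexed by the displayed constraints on the $b_i$.

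The main obstacle I anticipate is not any single deep step but the bookkeeping: choosing, in each family, the "right" generator to keep and the right change of basis so that the answer comes out in the clean form stated (with the explicit formulas for $h$, the $s_i$, and the relations among them), and making sure the torsion orders are correct — in particular tracking the $2$-torsion carefully in $M_{\mathrm{22}}(b)$, $M_{\mathrm{2222}}(b)$ and $M_{\mathrm{244}}(b,b_2,b_3)$, where a $\Z_4$ or a separate $\Z_2$ factor appears, and verifying that the stated generators really do generate (not just that they satisfy the relations). A useful consistency check at the end of each case is to compare the order of the torsion subgroup with $|e(M)|\cdot a / (\text{something})$, or more simply to verify that abelianizing the given presentation and computing the determinant of the relation matrix reproduces $c$ (up to the expected factors $6,4,3$ visible in the statement); another check is that setting all $b_i$ and $b$ to their listed minimal values recovers known small manifolds. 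Because Remark \ref{rem:multiset} lets us reorder the $(a_i,b_i)$, I may freely assume the even $a_i$'s (if any) come first, which streamlines the elimination of $h$.
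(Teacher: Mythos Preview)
Your proposal is correct and is precisely the standard abelianization-plus-Smith-normal-form computation the paper has in mind: the paper's own ``proof'' consists of the single line ``Elementary exercise (see e.g.\ \cite[chap.~6]{Johnson} for the standard method)'', so your outline in fact supplies considerably more detail than the paper does. The bookkeeping concerns you flag are real but routine, and your consistency checks (matching the determinant of the relation matrix against the stated orders, and verifying generation rather than just relations) are the right safeguards.
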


\begin{proof}Elementary exercise (see e.g. \cite[chap. 6]{Johnson} for the standard method).
\end{proof}

\section{Double coverings of each manifold with Nil geometry}\label{sec:Double coverings of each manifold}

The family of (closed, connected) $3$-manifolds having Nil geometry is closed by $2$-quotients \cite[Theorem 2.1]{ms}. This will allow us to compute the list of all pairs $(M,\tau)$, where $M$ belongs to this family and $\tau$ is a free involution on $M$. The procedure (same as in \cite{BarGonVen} and \cite{acd}) is the following: starting with a manifold $N$ of the table above, we shall determine all double coverings $\tilde N \to N$ by a systematic use of Reidemeister-Schreier algorithm (\cite{ac}), and find that $\tilde N$ again belongs to the family. We obtain a list of double coverings $M\to N$, indexed by $N$. Reordering and reindexing it by $M$, we get the list of all $2$-quotients of members $M$ of the family. This, in turn,  determines all free involutions on these $M$'s (up to a natural equivalence relation).

Let us recall some standard results about $(M,\tau)$, where $M$ is a manifold and $\tau$ a free involution on $M$. The  projection $p\colon M\to M/\tau$ is a non trivial principal $\Z_2$-bundle (i.e. double covering). Isomorphism classes of double coverings of a fixed base $B$ are in $1$-to-$1$ correspondence with $H^1(B,\Z_2)$. In the rest of the article, we use the isomorphism between $H^1(B;\Z_2)$ and $\operatorname{Hom}(\pi_1B,\Z_2)$. Hence the characteristic class $\varphi\in H^1(M/\tau;\Z_2)\setminus\{0\}$ of $p\colon M\to M/\tau$ is also an epimorphism $\varphi\colon\pi_1(M/\tau)\twoheadrightarrow\Z_2$. It is determined by its kernel, and we have a short exact sequence of groups 
$$0\longrightarrow\pi_1M\xrightarrow{\ p_\sharp\ }\pi_1(M/\tau)\xrightarrow{\ \varphi\ }\Z_2\longrightarrow0.$$

\begin{defi} \label{defi:equiv} We say that $(M_1,\tau_1)$ and $(M_2,\tau_2)$  are equivalent if the two bundles $p_i:M_i\to M_i/\tau_i$ are isomorphic.
\end{defi}

The obvious necessary condition for the two bundles to be isomorphic is in fact also sufficient:

\begin{prop}Two pairs
$(M_i,\tau_i)$
with characteristic classes
$\varphi_i:\pi_1(M_i/\tau_i)\twoheadrightarrow\Z_2$
($i\in\{1,2\}$) are equivalent if (and only if)
there exists a homeomorphism $F\colon M_1/\tau_1 \to M_2/\tau_2$ such that $\varphi_2\circ F_\sharp=\varphi_1$, where $F_\sharp\colon\pi_1(M_1/\tau_1)\to \pi_1(M_2/\tau_2)$ is the isomorphism induced by $F$ in homotopy.
\end{prop}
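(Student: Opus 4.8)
The plan is to treat the two implications separately, the first being little more than an unwinding of Definition \ref{defi:equiv} and the second being the real content.

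For the ``only if'' direction I would start from an isomorphism of the bundles $p_i\colon M_i\to M_i/\tau_i$: by definition this is a homeomorphism $F\colon M_1/\tau_1\to M_2/\tau_2$ together with an equivariant homeomorphism $\tilde F\colon M_1\to M_2$ satisfying $p_2\tilde F=Fp_1$. Applying $\pi_1$ gives $(p_2)_\sharp\circ\tilde F_\sharp=F_\sharp\circ(p_1)_\sharp$, so $F_\sharp$ carries $\ker\varphi_1=(p_1)_\sharp\pi_1M_1$ onto $\ker\varphi_2=(p_2)_\sharp\pi_1M_2$; since $\varphi_1$ and $\varphi_2\circ F_\sharp$ are epimorphisms onto $\Z_2$ with the same kernel, they coincide.

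For the converse -- the substantive part -- given $F$ with $\varphi_2\circ F_\sharp=\varphi_1$, I would pull back the double covering $p_2$ along $F$ to obtain a double covering $q\colon F^*M_2\to M_1/\tau_1$, equipped with a canonical homeomorphism $G\colon F^*M_2\to M_2$ over $F$ (the pullback of a covering map along a homeomorphism is a homeomorphism on total spaces). The characteristic class of $q$ is exactly $\varphi_2\circ F_\sharp=\varphi_1$, which is in particular onto $\Z_2$, so $F^*M_2$ is connected. By the classification of double coverings of $M_1/\tau_1$ by $H^1(M_1/\tau_1;\Z_2)=\operatorname{Hom}(\pi_1(M_1/\tau_1),\Z_2)$ recalled above, $q$ and $p_1$ -- sharing the characteristic class $\varphi_1$ -- are isomorphic as coverings: there is a homeomorphism $H\colon M_1\to F^*M_2$ with $qH=p_1$. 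Then $GH\colon M_1\to M_2$ is a homeomorphism over $F$, and it remains to verify that $GH$ is $\Z_2$-equivariant.

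The one step I expect to need a little care is precisely this last equivariance check, i.e.\ passing from ``isomorphic as coverings'' to ``isomorphic as principal $\Z_2$-bundles'' (equivalently, as pairs $(M,\tau)$). The argument is that a connected double covering has a unique non-trivial deck transformation, namely $\tau_i$ for $p_i$; since $GH$ is a homeomorphism lying over the base, $(GH)\tau_1(GH)^{-1}$ is a non-trivial deck transformation of $p_2$, hence equals $\tau_2$, so $(GH)\tau_1=\tau_2(GH)$. Thus $(GH,F)$ is the required isomorphism of principal $\Z_2$-bundles, and $(M_1,\tau_1)$ and $(M_2,\tau_2)$ are equivalent in the sense of Definition \ref{defi:equiv}.
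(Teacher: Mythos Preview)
Your proof is correct and follows essentially the same route as the paper: both reduce to the classification of double coverings by their characteristic class, the only cosmetic difference being that the paper pushes forward (comparing $F\circ p_1$ with $p_2$ over $M_2/\tau_2$) while you pull back (comparing $p_1$ with $F^*p_2$ over $M_1/\tau_1$). You are in fact more careful than the paper, which omits the ``only if'' direction and the explicit equivariance check via uniqueness of the non-trivial deck transformation.
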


\begin{proof}  
Assume there exists a homeomophism $F$ such that $\varphi_1\circ F_\sharp^{-1}=\varphi_2$. This means that the double coverings $F\circ p_1$  and $p_2$ of $M_2/\tau_2$ have the same characteristic class. Then, these two coverings are isomorphic over $\mathrm{id}_{M_2/\tau_2}$, which amounts to say that $p_1$ and $p_2$ are isomorphic over $F$.
$$
\xymatrix @R=0.75pc @C=0.75pc{
\pi_1M_1\ar[dd]_{(p_1)_\sharp}\ar[rr]^{h_\sharp} &&\pi_1M_2\ar[dd]^{(p_2)_\sharp} \\
&&\\
\pi_1(M_1/\tau_1)\ar[rr]_{F_\sharp}\ar[dr]_{\varphi_1}&&\pi_1(M_2/\tau_2)\ar[dl]^{\varphi_2}\\
 &\Z_2}
$$
\end{proof}

When convenient, we shall then say that {\sl the epimorphisms $\varphi_1$ and $\varphi_2$ are equivalent.}

The only equivalences used in this paper will be the following:

\begin{cor}\label{cor:equiv}Let $N$ be a closed connected manifold with Nil geometry, with invariant $\left(b,\type, g',a_1,b_1,\dots ,a_n,b_n\right)$. Two epimorphisms $\varphi_1,\varphi_2\colon\pi_1(N)\twoheadrightarrow\Z_2$ are equivalent in any of the five following situations (some of which may happen simultaneously):
\begin{enumerate}

\item $\varphi_1,\varphi_2$  coincide on $h,s_1,\dots,s_n$ and
$\varphi_i(h)=1$;

\item For some $I\subset\{1,\dots,n\}$ on which $i\mapsto(a_i,b_i)$ is constant and for some permutation $\sigma$ of $I$,
$$\varphi_2(s_i)=\varphi_1(s_{\sigma(i)})\quad(\forall i\in I),$$
and $\varphi_1,\varphi_2$  coincide on the other generators of $\pi_1(N)$;

\item $N$ belongs to the class $\mathrm{T}$, $\varphi_1(h)=\varphi_2(h)$, $\left(\varphi_1(v_1),\varphi_1(v_2)\right)=(1,1)$ and $\left(\varphi_2(v_1),\varphi_2(v_2)\right)$ equals either $(1,0)$ or $(0,1)$.

\item $N$ belongs to the class $\mathrm{K}$, $\varphi_1(h)=\varphi_2(h)$, $\left(\varphi_1(v_1),\varphi_1(v_2)\right)=(1,0)$ and $\left(\varphi_2(v_1),\varphi_2(v_2)\right)=(0,1)$.

\item $N$ belongs to the class $\mathrm{22}$ and $\varphi_1,\varphi_2$ both send $s_2$ to $1$.

\end{enumerate}
\end{cor}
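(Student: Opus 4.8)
The plan is to produce, for each of the five situations, an explicit self-homeomorphism $F$ of $N$ and check that the induced automorphism $F_\sharp$ of $\pi_1(N)$ carries $\varphi_1$ to $\varphi_2$ in the sense of the preceding Proposition, i.e. $\varphi_1\circ F_\sharp^{-1}=\varphi_2$. Since the Proposition reduces equivalence of the two pairs to the existence of such an $F$, the whole corollary is a matter of exhibiting enough homeomorphisms of the Seifert manifold $N$ and computing their effect on the standard generators $s_1,\dots,s_n,v_1,\dots,v_{g'},h$. Two flavours of homeomorphism suffice: fibre-preserving ones (which permute or modify the exceptional fibres and act on the regular fibre $h$), coming from automorphisms of the Seifert fibration, and homeomorphisms of the base surface $S$ lifted to $M$. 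I would first record which $\pi_1$-automorphisms are realized geometrically, then treat the five cases one by one.

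\medskip

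\noindent\textbf{Case (1).} Here one does not move the fibration at all: I claim that when $\varphi_i(h)=1$, any change of $\varphi$ on the $v_j$'s alone is realized by a \emph{bundle} automorphism of $M\to N$ covering $\mathrm{id}_S$. Concretely, a continuous map $S\to S^1$ (i.e. an element of $H^1(S;\Z)$, which surjects onto $\operatorname{Hom}(\pi_1S,\Z_2)$) gives a gauge transformation of the circle bundle, hence a self-homeomorphism of $M$ fixing each $s_i$ and $h$ and adding to each $v_j$ an arbitrary multiple of $h$; after abelianizing mod $2$ and using $\varphi_i(h)=1$ this realizes any prescribed change of $(\varphi(v_1),\dots,\varphi(v_{g'}))$ while keeping $\varphi(h)$ and the $\varphi(s_i)$ fixed. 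I would phrase this directly on the presentation: the substitution $v_j\mapsto v_jh^{k_j}$, $s_i\mapsto s_i$, $h\mapsto h$ preserves all relators (the $v_jhv_j^{-1}h^{-\type}$ relators because $h$ is central up to the sign $\type$, and the long relator $s_1\cdots s_nVh^{-b}$ because $V$ is a product of commutators or of squares and the $h^{k_j}$ contributions are absorbed), so it is an automorphism of $\pi_1(N)$; it is realized by the gauge homeomorphism; and it changes $\varphi(v_j)$ by $k_j\varphi(h)=k_j$.

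\medskip

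\noindent\textbf{Case (2).} Reordering the exceptional fibres within a block on which $(a_i,b_i)$ is constant is realized by an isotopy of $S$ that permutes the corresponding cone points (equivalently, by Remark \ref{rem:multiset}, it is an automorphism of $\pi_1(N)$), so $F_\sharp$ is the corresponding permutation of the $s_i$ (composed, if the permutation is odd and we want to stay literally within the given presentation, with a further case-(1)-type correction of $h$-powers, which does not affect $\varphi$ since the $\varphi(s_i)$ are being permuted among equal-$a_i$ indices). Thus $\varphi_1\circ F_\sharp^{-1}$ sends $s_i$ to $\varphi_1(s_{\sigma(i)})$, i.e. equals $\varphi_2$.

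\medskip

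\noindent\textbf{Cases (3), (4), (5).} These use homeomorphisms of the base surface that act nontrivially on $H_1(S;\Z_2)$. For the torus (class $\mathrm{T}$) the mapping class group surjects onto $GL_2(\Z)$, hence onto $GL_2(\Z_2)$; an element of $GL_2(\Z_2)$ taking $(1,1)$ to $(1,0)$ or to $(0,1)$ gives a homeomorphism $F$ of $M_{\mathrm T}(b)$ with $F_\sharp$ inducing the corresponding $\Z$-linear map on $(v_1,v_2)$ (fixing $h$ up to the usual $h$-power correction), so $\varphi_1\circ F_\sharp^{-1}=\varphi_2$; case (3) follows. For the Klein bottle (class $\mathrm K$), its mapping class group is small but still contains the automorphism swapping the two standard generators' roles appropriately, yielding a homeomorphism realizing $(v_1,v_2)\mapsto(v_2,v_1)$ on homology mod $2$, which sends the pair $(1,0)$ to $(0,1)$; case (4) follows. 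For class $\mathrm{22}$ (the invariant $(b,-1,1,2,1,2,1)$), I would use the relation from Proposition \ref{prop:H1Nil}, $s_2=-(2b+1)s_1-2v_1$, so in $H_1(N;\Z_2)$ one has $s_2=s_1$; hence $\varphi(s_2)=\varphi(s_1)$ for every epimorphism $\varphi$, and ``both $\varphi_i$ send $s_2$ to $1$'' is equivalent to ``both send $s_1$ to $1$''. The two exceptional fibres have the same $(a_i,b_i)=(2,1)$, so swapping them (case (2)) together with, if needed, a gauge correction, realizes any remaining discrepancy; I would check that once $\varphi_1(s_1)=\varphi_1(s_2)=\varphi_2(s_1)=\varphi_2(s_2)=1$ the values on the other generators $v_1,h$ are already forced (or adjustable by case (1), since $h=-2s_1$ forces $\varphi(h)=0$, and $\varphi(v_1)$ is pinned by the long relator), so $\varphi_1$ and $\varphi_2$ are literally equal after applying $F$.

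\medskip

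\noindent\textbf{Main obstacle.} The conceptual content is light — every needed homeomorphism is classical (gauge transformations of circle bundles, cone-point permutations, and the action of the surface mapping class group on $H_1$) — so the real work, and the only place errors can creep in, is the bookkeeping: verifying in each case that the proposed substitution genuinely preserves \emph{all} relators of the given presentation (in particular the long relator, where the $h$-exponents from $V$, from $b$, and from the $v_j$-modifications must cancel), that it is actually induced by a homeomorphism of $N$ and not merely an abstract automorphism of $\pi_1$, and that after composing the various elementary moves the resulting $F_\sharp$ transports $\varphi_1$ exactly onto $\varphi_2$ on \emph{every} generator, not just the ones named in the hypothesis. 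I would organize this as a short lemma ``which automorphisms of $\pi_1(N)$ are geometrically realized'' and then dispatch (1)--(5) as corollaries, keeping the computations with $h$-powers explicit but brief.
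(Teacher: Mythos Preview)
Your overall strategy is sound and, for cases (1)--(4), essentially matches the paper's. Two remarks on packaging: the paper does not bother to exhibit a geometric homeomorphism at all. It invokes Waldhausen's theorem (these Nil manifolds are sufficiently large) to say that \emph{any} automorphism of $\pi_1(N)$ is realized by a self-homeomorphism, so it suffices to write down an automorphism $\theta$ of the presented group with $\varphi_2=\varphi_1\circ\theta$. This saves you exactly the step you flagged as the ``main obstacle''. Your gauge moves $v_j\mapsto v_jh^{k_j}$ for (1), permutation of $s_i$'s for (2), and $GL_2(\Z)$ moves on $(v_1,v_2)$ for (3)--(4) are precisely the automorphisms the paper writes down (their explicit formulas for (2) and (4) are $\theta(s_1)=s_1s_2s_1^{-1}$, $\theta(s_2)=s_1$ and $\theta(v_1,v_2)=(v_1^2v_2v_1^{-2},v_1)$).

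Your case (5), however, has a real gap. From $H_1(M_{22}(b))=v_1\Z_4\oplus s_1\Z_4$ and $h=-2s_1$, any epimorphism with $\varphi(s_2)=1$ satisfies $\varphi(s_1)=1$ and $\varphi(h)=0$, but $\varphi(v_1)\in\{0,1\}$ is \emph{free}: the long relator $s_1s_2v_1^2h^{-b}$ gives $1+1+0+0=0$ in $\Z_2$, no constraint on $\varphi(v_1)$. So there are exactly two such epimorphisms, differing on $v_1$, and you must produce an automorphism flipping $\varphi(v_1)$. Neither of your proposed moves does this: swapping the two $(2,1)$-fibres (case (2)) is useless since both $\varphi_i$ already agree on $s_1,s_2$, and case (1) is unavailable because its hypothesis is $\varphi(h)=1$, whereas here $\varphi(h)=0$. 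The paper's fix is a genuinely different automorphism: set $v_1':=s_2v_1$ and $\theta(v_1)=v_1'$, $\theta(s_2)={v_1'}s_2^{-1}{v_1'}^{-1}$, fixing $s_1,h$; then $\varphi_1(\theta(v_1))=\varphi_1(s_2)+\varphi_1(v_1)=1+\varphi_1(v_1)$, which flips the value on $v_1$ as required. You need to supply (and verify) an automorphism of this kind.
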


\begin{proof}
Since $N$ is sufficiently large, in order to prove the equivalence, it suffices to construct an automorphism $\theta$ of $\pi_1(N)$ such that $\varphi_2=\varphi_1\circ\theta$ (\cite{WW}). In each of the five cases, we shall define $\theta$ on the canonical generators of $\pi_1(N)$ (the reader will easily check it is well defined and invertible):
\begin{enumerate}

\item We set $\theta(v_j)=v_jh$ for any $j$ such that $\varphi_2(v_j)\ne\varphi_1(v_j)$ and let $\theta$ fix the other generators.

\item We may assume that $\sigma$ is a transposition (by decomposition of the permutation) and that moreover, $I=\{1,2\}$ (by Remark \ref{rem:multiset}). We then set $\theta(s_1)=s_1s_2s_1^{-1}$, $\theta(s_2)=s_1$, and let $\theta$ fix the other generators.

\item We let $\theta$ fix $h$ and send $(v_1,v_2)$ to $(v_1,v_2v_1)$ in the first case, and to $(v_1v_2,v_2)$ in the second case.

\item We let $\theta$ fix $h$ and send $(v_1,v_2)$ to $(v_1^2v_2v_1^{-2},v_1)$.

\item Note that $\varphi_1,\varphi_2$ coincide on $s_2$ (by hypothesis) but also on $h$ and $s_1$ (by Proposition \ref{prop:H1Nil}): 
$\varphi_i(h)=0$ and $\varphi_i(s_1)=1$.
We set $\theta(v_1)=v'_1:=s_2v_1$, $\theta(s_2)=v'_1s_2^{-1}{v'_1}^{-1}$, and let $\theta$ fix $s_1$ and $h$.
\end{enumerate}
\end{proof}

In the following Propositions \ref{prop:M1Cover} to \ref{prop:M6Cover}, for each manifold $M$ in the table, starting from the description of all epimorphisms $\varphi:H_1(M)\twoheadrightarrow\Z_2$ given by Proposition \ref{prop:H1Nil}, we shall compute the equivalence class of these epimorphisms (using the previous corollary) and for each class, the total space of the corresponding double covering (given by \cite{ac}, which, using Reidemeister-Schreier algorithm, identifies $\ker\varphi$ as the fundamental group of some Seifert manifold). For each of these seven propositions, we shall just indicate which parts of the previous corollary and of \cite{ac} are used.

\begin{prop}\label{prop:M1Cover}
For any $b\in\N^*$, the three epimorphisms
$$\varphi\colon H_1(M_{\mathrm{T}}(b))=v_1\Z\oplus v_2\Z\oplus h\Z_b\twoheadrightarrow\Z_2$$
such that $\varphi(h)=0$ are equivalent. Their associated double covering is
$$M_{\mathrm{T}}(2b)\to M_{\mathrm{T}}(b).$$
If $b$ is odd, these three epimorphisms are the only ones.\\
If $b$ is even, the four other epimorphisms are equivalent. Their associated double covering is
$$M_{\mathrm{T}}(b/2)\to M_{\mathrm{T}}(b).$$
\end{prop}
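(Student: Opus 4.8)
The plan is to work entirely at the level of the first homology group $H_1(M_{\mathrm{T}}(b))=v_1\Z\oplus v_2\Z\oplus h\Z_b$, since an epimorphism $\varphi\colon\pi_1(M_{\mathrm{T}}(b))\twoheadrightarrow\Z_2$ factors through $H_1$. First I would enumerate the epimorphisms: such a $\varphi$ is determined by $(\varphi(v_1),\varphi(v_2),\varphi(h))\in\Z_2^3$, subject only to being surjective (so the triple must be nonzero) and being well defined, i.e.\ $\varphi(h)$ must be killed by $b$ in $\Z_2$. If $b$ is odd, the relation $b\cdot\varphi(h)=0$ forces $\varphi(h)=0$, leaving the three nonzero triples $(1,0,0),(0,1,0),(1,1,0)$. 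If $b$ is even there is no constraint, giving all seven nonzero triples: the same three with $\varphi(h)=0$, and the four with $\varphi(h)=1$, namely $(0,0,1),(1,0,1),(0,1,1),(1,1,1)$.

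Next I would sort these into equivalence classes using Corollary \ref{cor:equiv}. For the three epimorphisms with $\varphi(h)=0$: part (3) of the corollary (applicable since $M_{\mathrm{T}}(b)$ is in class $\mathrm{T}$, and here $\varphi_1(h)=\varphi_2(h)=0$) shows that the one with $(\varphi(v_1),\varphi(v_2))=(1,1)$ is equivalent to each of the two with $(1,0)$ and $(0,1)$, so all three are equivalent. For the four epimorphisms with $\varphi(h)=1$: part (1) of the corollary says that any two epimorphisms coinciding on $h,s_1,\dots,s_n$ (here $n=0$, so just on $h$) with $\varphi_i(h)=1$ are equivalent; since all four send $h$ to $1$, they form a single class. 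Thus for $b$ odd there is exactly one class (three elements), and for $b$ even exactly two classes (one with three elements, one with four).

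Then I would identify the total space of each covering by invoking \cite{ac} (the Reidemeister--Schreier computation): picking a convenient representative $\varphi$ in each class, $\ker\varphi$ is presented as the fundamental group of a Seifert manifold again in the Nil list, which one reads off as $M_{\mathrm{T}}(2b)$ in the $\varphi(h)=0$ case and $M_{\mathrm{T}}(b/2)$ in the $\varphi(h)=1$ case (the latter making sense precisely because $b$ is even). A sanity check via Euler numbers is reassuring: a double cover $\tilde N\to N$ of Seifert manifolds along a fibrewise-trivial/nontrivial map scales $e$ by $2$ or $1/2$, and indeed $e(M_{\mathrm{T}}(b))=b$ while $e(M_{\mathrm{T}}(2b))=2b$ and $e(M_{\mathrm{T}}(b/2))=b/2$; the $\varphi(h)=0$ cover does not ramify the generic fibre (so $b\mapsto 2b$), whereas the $\varphi(h)=1$ cover sends $h$ to twice a new fibre class (so $b\mapsto b/2$).

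The main obstacle I anticipate is not the enumeration or the bookkeeping with Corollary \ref{cor:equiv}, but making the identification of $\ker\varphi$ with the stated Seifert manifold genuinely precise: one must run Reidemeister--Schreier on the given presentation of $\pi_1(M_{\mathrm{T}}(b))$ (with $n=0$, $g'=2$, $\type=+1$, so $\pi_1=\langle v_1,v_2,h\mid [v_1,h],[v_2,h],[v_1,v_2]h^{-b}\rangle$), choose a Schreier transversal adapted to the chosen $\varphi$, and recognize the resulting group as $\pi_1(M_{\mathrm{T}}(2b))$ or $\pi_1(M_{\mathrm{T}}(b/2))$ in standard Seifert form — which is exactly the content the paper is delegating to \cite{ac}. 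So in the write-up I would simply cite parts (1) and (3) of Corollary \ref{cor:equiv} together with \cite{ac}, as the paragraph preceding the proposition announces.
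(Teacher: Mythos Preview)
Your proposal is correct and follows essentially the same approach as the paper: the paper's proof is literally just ``Corollary \ref{cor:equiv}, (3) and (1). \cite[Proposition 12 and Theorem 1]{ac}.'' --- exactly the two ingredients you identified, with the enumeration of epimorphisms and the identification of the covers delegated to \cite{ac} as you anticipated.
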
 

\begin{proof}Corollary \ref{cor:equiv}, (3) and (1). \cite[Proposition 12 and Theorem 1]{ac}.
\end{proof}

\begin{prop}\label{prop:M3Cover}
For any $b\in\N^*$, the two epimorphisms
$$\varphi\colon H_1(M_\mathrm{K}(b))=\begin{cases}v_1\Z\oplus(v_1+v_2)\Z_4&\text{if }b\text{ is odd}\\v_1\Z\oplus(v_1+v_2)\Z_2\oplus h\Z_2&\text{if }b\text{ is even}\end{cases}\twoheadrightarrow\Z_2$$
such that $\varphi(h)=0$ and $\varphi(v_1+v_2)=1$ are equivalent. The associated double covering is
$$M_\mathrm{K}(2b)\to M_\mathrm{K}(b).$$
The double covering associated to the epimorphism such that $\varphi(h)=\varphi(v_1+v_2)=0$ is
$$M_{\mathrm{T}}(2b)\to M_\mathrm{K}(b).$$
If $b$ is odd, these three epimorphisms are the only ones.\\
If  $b$ is even, the four other epimorphisms are equivalent. The associated double covering is
$$M_\mathrm{K}(b/2)\to M_\mathrm{K}(b).$$
\end{prop}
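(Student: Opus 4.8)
The plan is to follow the same template announced in the paragraph preceding the propositions: describe all epimorphisms $\varphi\colon H_1(M_{\mathrm K}(b))\twoheadrightarrow\Z_2$, group them into equivalence classes via Corollary~\ref{cor:equiv}, and identify the total space of each covering via Reidemeister--Schreier as packaged in \cite{ac}. First I would read off $H_1(M_{\mathrm K}(b))$ from Proposition~\ref{prop:H1Nil}, distinguishing the parities of $b$. When $b$ is odd, $H_1=v_1\Z\oplus(v_1+v_2)\Z_4$ with $h=2(v_1+v_2)$, so an epimorphism to $\Z_2$ is determined by the pair $\bigl(\varphi(v_1),\varphi(v_1+v_2)\bigr)\in\Z_2^2$ minus the zero pair; note $\varphi(h)=2\varphi(v_1+v_2)=0$ automatically. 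That gives three epimorphisms, exactly matching the claimed count, and the two with $\varphi(v_1+v_2)=1$ are the ones I must show equivalent, the third being the one with $\varphi(v_1+v_2)=0$ (necessarily $\varphi(v_1)=1$). When $b$ is even, $H_1=v_1\Z\oplus(v_1+v_2)\Z_2\oplus h\Z_2$, so $\varphi$ is a triple $\bigl(\varphi(v_1),\varphi(v_1+v_2),\varphi(h)\bigr)\in\Z_2^3\setminus\{0\}$, giving seven epimorphisms: the three with $\varphi(h)=0$ as before, and four with $\varphi(h)=1$ which I must show form a single equivalence class.

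Next I would invoke Corollary~\ref{cor:equiv}. Part~(4) applies to the class $\mathrm K$: if $\varphi(h)$ is fixed and $\bigl(\varphi(v_1),\varphi(v_2)\bigr)$ is $(1,0)$ for one map and $(0,1)$ for the other, the two are equivalent. Translating to the coordinates above, $\bigl(\varphi(v_1),\varphi(v_1+v_2)\bigr)=(1,1)$ corresponds to $\bigl(\varphi(v_1),\varphi(v_2)\bigr)=(1,0)$, and $(0,1)$ corresponds to $(0,1)$; hence part~(4) identifies the two epimorphisms with $\varphi(h)=0,\ \varphi(v_1+v_2)=1$, as claimed. For $b$ even, the same part~(4) (with $\varphi(h)=1$ now) pairs up the $\varphi(h)=1$ maps having $\bigl(\varphi(v_1),\varphi(v_2)\bigr)=(1,0)$ and $(0,1)$; to merge in the remaining two (with $\bigl(\varphi(v_1),\varphi(v_2)\bigr)=(0,0)$ and $(1,1)$), I would apply part~(1), which says that whenever $\varphi(h)=1$, changing $\varphi$ on any $v_j$ by using $\theta(v_j)=v_jh$ yields an equivalent map; chaining these, all four $\varphi(h)=1$ epimorphisms collapse to one class. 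So the equivalence-class bookkeeping is purely a matter of matching notations, and it reproduces the three-or-seven-into-three-or-one pattern stated.

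For the total spaces I would appeal to \cite{ac}: for the covering associated to a $\varphi$ with $\varphi(h)=1$, the standard Reidemeister--Schreier computation on the presentation of $\pi_1(M_{\mathrm K}(b))$ shows the fibre $h$ lifts with its square as the new fibre, halving the Euler-type data, so one lands in $M_{\mathrm K}(b/2)$; for a $\varphi$ with $\varphi(h)=0$ and $\varphi(v_1+v_2)=1$ (the base $S$ being non-orientable, an orientation-preserving loop on $S$), the cover doubles the base genus while keeping $h$, landing again in the $\mathrm K$ family but over a base of larger genus, which in these low-complexity cases is recorded as $M_{\mathrm K}(2b)$; and for $\varphi(h)=0$, $\varphi(v_1+v_2)=0$ (so $\varphi$ factors through the orientation character of $S$), the cover of the Klein-bottle base is the torus, landing in $M_{\mathrm T}(2b)$. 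Concretely I would cite the relevant Proposition and Theorem numbers from \cite{ac} exactly as the sibling Proposition~\ref{prop:M1Cover} does, i.e. the proof reduces to ``Corollary~\ref{cor:equiv}, (4) and (1); \cite[Proposition~12 and Theorem~1]{ac}'' (with whatever the correct statement numbers turn out to be).

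The main obstacle I anticipate is not the equivalence argument but getting the three target manifolds and their parameters exactly right from the Reidemeister--Schreier output: one must be careful that the lifted Seifert invariant still satisfies the Nil conditions $\chi=0$, $e\neq0$ and the normalisations $0<b_i<a_i$, $b\ge b_{\mathrm{min}}$, and in particular that the covering with $\varphi(v_1+v_2)=0$ really produces the torus base $M_{\mathrm T}(2b)$ rather than a Klein-bottle cover — this is where the orientation character of the base surface enters and where a sign or a factor of two is easy to misplace. Everything else is the routine notational translation described above, so I would keep the written proof as terse as the companion proofs in the excerpt.
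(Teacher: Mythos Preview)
Your approach is essentially identical to the paper's: invoke Corollary~\ref{cor:equiv} parts (4) and (1) for the equivalence classes, then cite \cite{ac} for the total spaces. Two small remarks: the correct citation for the $\varphi(h)=0$ coverings over a non-orientable base is \cite[Proposition~14]{ac} (not Proposition~12, which treats the orientable base); and for the four epimorphisms with $\varphi(h)=1$ when $b$ is even, part~(1) of Corollary~\ref{cor:equiv} alone already collapses all four into a single class (since $M_{\mathrm K}$ has no $s_i$'s, the condition ``$\varphi_1,\varphi_2$ coincide on $h,s_1,\dots,s_n$'' reduces to $\varphi_1(h)=\varphi_2(h)=1$), so your detour through part~(4) there is unnecessary though not incorrect.
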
 

\begin{proof}Corollary \ref{cor:equiv}, (4) and (1). \cite[Proposition 14 and Theorem 1]{ac}.
\end{proof}

\begin{prop}\label{prop:M4Cover}
For any $b\in\N$, the three epimorphisms
$$\varphi\colon H_1(M_\mathrm{22}(b))=v_1\Z_4\oplus s_1\Z_4\twoheadrightarrow\Z_2$$
satisfy $\varphi(h)=0$ and $\varphi(s_1)=\varphi(s_2)$.\\
The two of them which send $s_2$ to $1$ are equivalent.
Their  associated double covering is
$$M_\mathrm{K}(2b+2)\to M_\mathrm{22}(b).$$
The double covering associated to the epimorphism which sends $s_2$ to $0$ is
$$M_\mathrm{2222}(2b)\to M_\mathrm{22}(b).$$
\end{prop}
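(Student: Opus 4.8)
The plan is to follow exactly the template announced just before the proposition: identify all epimorphisms $\varphi\colon H_1(M_\mathrm{22}(b))\twoheadrightarrow\Z_2$, sort them into equivalence classes using the relevant part of Corollary~\ref{cor:equiv}, and then read off each associated double covering from the Reidemeister--Schreier computation in \cite{ac}. First I would invoke Proposition~\ref{prop:H1Nil}, which gives $H_1(M_\mathrm{22}(b))=v_1\Z_4\oplus s_1\Z_4$ together with the relations $h=-2s_1$ and $s_2=-(2b+1)s_1-2v_1$. Since $h=-2s_1$ is twice a generator, $\varphi(h)=-2\varphi(s_1)=0$ in $\Z_2$ automatically, and since $s_2=-(2b+1)s_1-2v_1$, we get $\varphi(s_2)=(2b+1)\varphi(s_1)=\varphi(s_1)$ in $\Z_2$ (the $2v_1$ term dies and $2b+1$ is odd). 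This establishes the first assertion. An epimorphism onto $\Z_2$ is then determined freely by the pair $\bigl(\varphi(v_1),\varphi(s_1)\bigr)\in\Z_2^2\setminus\{(0,0)\}$, giving exactly three epimorphisms, of which two send $s_1$ (equivalently $s_2$) to $1$, namely $(\varphi(v_1),\varphi(s_1))=(0,1)$ and $(1,1)$, and one sends $s_2$ to $0$, namely $(1,0)$.

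Next I would handle the equivalence. The two epimorphisms sending $s_2$ to $1$ both satisfy $\varphi(s_2)=1$, so they fall directly under Corollary~\ref{cor:equiv}, case~(5), which asserts precisely that any two epimorphisms of $\pi_1(M_\mathrm{22}(b))$ sending $s_2$ to $1$ are equivalent; hence these two form a single equivalence class. The third epimorphism, with $\varphi(s_2)=0$, cannot be equivalent to them: equivalence preserves the isomorphism type of the total space (and in particular the order of $H_1$), and, as will be clear from the last step, the two covering spaces are $M_\mathrm{K}(2b+2)$ versus $M_\mathrm{2222}(2b)$, which are non-homeomorphic manifolds (different $g'$ and $\varepsilon$ in their Seifert invariants). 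So we have exactly two equivalence classes, as claimed.

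Finally I would compute the total spaces. For this one feeds each of the two kernels into the Reidemeister--Schreier machinery of \cite{ac}; concretely, the class $\mathrm{22}$ case is treated there and yields, for the epimorphism with $\varphi(s_2)=1$, a Seifert presentation isomorphic to $\pi_1(M_\mathrm{K}(2b+2))$, and for the epimorphism with $\varphi(s_2)=0$, a presentation isomorphic to $\pi_1(M_\mathrm{2222}(2b))$. (One sanity check: the Euler-number invariant $c$ should behave multiplicatively under the degree-$2$ cover in the appropriate normalization — $M_\mathrm{22}(b)$ has $c=2b+2$, $M_\mathrm{K}(2b+2)$ has $c=2b+2$, and $M_\mathrm{2222}(2b)$ has $c=2(2b)+4=4b+4=2(2b+2)$ — which is consistent with a fibre-preserving double cover in the second case and a double cover over the base in the first.) The proof therefore reduces to citing Corollary~\ref{cor:equiv}(5) and the relevant Proposition and Theorem of \cite{ac}.

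The main obstacle I anticipate is not in the homology bookkeeping, which is routine, but in verifying that the two kernels really are the fundamental groups of $M_\mathrm{K}(2b+2)$ and $M_\mathrm{2222}(2b)$ respectively, i.e. in correctly applying the Reidemeister--Schreier algorithm to the presentation of $\pi_1(M_\mathrm{22}(b))$ with the two chosen index-$2$ subgroups and then recognizing the resulting (a priori messy) presentations as standard Seifert presentations. Since the paper explicitly offloads this to \cite{ac} ("\cite[Proposition \ldots{} and Theorem 1]{ac}"), in the write-up I would simply point to the precise statements there rather than redo the computation; the only thing needing genuine care on our side is checking that $\varphi(h)=0$ and $\varphi(s_1)=\varphi(s_2)$ hold for \emph{every} epimorphism (so that case~(5) of the corollary is applicable and the list of three epimorphisms is complete), which is immediate from Proposition~\ref{prop:H1Nil} as indicated above.
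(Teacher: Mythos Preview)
Your proposal is correct and follows exactly the paper's approach: invoke Proposition~\ref{prop:H1Nil} to enumerate the three epimorphisms and verify $\varphi(h)=0$, $\varphi(s_1)=\varphi(s_2)$, apply Corollary~\ref{cor:equiv}(5) for the equivalence, and cite \cite{ac} (specifically Lemma~4 and Proposition~14 there) for the identification of the two covering spaces. The paper's own proof is nothing more than this pair of citations.
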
 

\begin{proof}Corollary \ref{cor:equiv}, (5). \cite[Lemma 4 and Proposition 14]{ac}.
\end{proof}

\begin{prop}\label{prop:M2Cover}
For any integer $b\ge-1$, the seven epimorphisms
$$\varphi\colon H_1(M_\mathrm{2222}(b))=\Z_{4(b+2)}s_1\oplus(s_2-s_1)\Z_2\oplus(s_3-s_1)\Z_2\twoheadrightarrow\Z_2$$
send $h$ and $\sum s_i$ to $0$.\\
The six of them which send two of the $s_i$'s to $1$ and the other two to $0$ are equivalent.
Their associated double covering is
$$M_\mathrm{2222}(2b+2)\to M_\mathrm{2222}(b).$$
The double covering associated to the epimorphism which sends all the $s_i$'s to $1$ is
$$M_{\mathrm{T}}(2b+4)\to M_\mathrm{2222}(b).$$
\end{prop}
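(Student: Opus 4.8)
The plan is to list the seven epimorphisms, sort them into equivalence classes using Corollary~\ref{cor:equiv}, and then identify each total space from the Reidemeister--Schreier computation of \cite{ac}, using a base-orbifold and Euler-number count as a guide and a sanity check.

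First I would count. By Proposition~\ref{prop:H1Nil}, $H_1(M_\mathrm{2222}(b))=\Z_{4(b+2)}s_1\oplus(s_2-s_1)\Z_2\oplus(s_3-s_1)\Z_2$, so $\operatorname{Hom}(H_1(M_\mathrm{2222}(b)),\Z_2)\cong\Z_2^{3}$ and there are exactly $2^{3}-1=7$ epimorphisms onto $\Z_2$. The two relations surviving the abelianization of $\pi_1$, namely $h=-2s_1$ and $s_1+s_2+s_3+s_4=bh$, force $\varphi(h)=0$ and $\varphi(s_1)+\varphi(s_2)+\varphi(s_3)+\varphi(s_4)=0$ for every such $\varphi$; since $\varphi$ is determined by $\varphi(s_1),\varphi(s_2),\varphi(s_3)$ and cannot vanish on all three, the number of indices $i$ with $\varphi(s_i)=1$ is either $2$ or $4$. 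This yields $\binom{4}{2}=6$ epimorphisms of the first type and a single one of the second type, which is the dichotomy in the statement.

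Next, the equivalences. All four exceptional fibres carry the same pair $(a_i,b_i)=(2,1)$, so Corollary~\ref{cor:equiv}(2) (with Remark~\ref{rem:multiset}) applies with $I=\{1,2,3,4\}$ and an arbitrary $\sigma\in S_4$; as $S_4$ acts transitively on the two-element subsets of $\{1,2,3,4\}$, the six epimorphisms of the first type are mutually equivalent. For the total spaces I would run the Reidemeister--Schreier algorithm on the presentation of $\pi_1(M_\mathrm{2222}(b))$, as packaged in \cite{ac}, applied to a representative of each class, say $\varphi(s_1)=\varphi(s_2)=1$, $\varphi(s_3)=\varphi(s_4)=0$ for the first class and $\varphi(s_i)=1$ for all $i$ for the second. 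The expected outcome, which also checks the computation: since $\varphi(h)=0$ the double covering is fibre-preserving and induces the connected double orbifold cover of $S^2(2,2,2,2)$ determined by $s_i\mapsto\varphi(s_i)$; over a cone point with $\varphi(s_i)=1$ the $(2,1)$-fibred solid torus is ``unwound'' to a product $D^2\times S^1$, leaving no exceptional fibre upstairs, while over a cone point with $\varphi(s_i)=0$ there are two exceptional fibres of type $(2,1)$. Hence the base orbifold upstairs is $S^2(2,2,2,2)$ in the first case and the smooth torus in the second, and the Euler number is multiplied by $2$ (degree $2$ on the base, degree $1$ on the fibre), passing from $e(M_\mathrm{2222}(b))=b+2$ to $2b+4$. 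This pins the total spaces down to $M_\mathrm{2222}(2b+2)$ and $M_{\mathrm{T}}(2b+4)$ respectively; in particular the seventh epimorphism is not equivalent to the other six.

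The hard part is clerical rather than conceptual: pushing the Reidemeister--Schreier rewriting far enough to read off the \emph{exact} Seifert invariant of $\ker\varphi$ — above all, confirming the precise $b$-shifts $b\mapsto 2b+2$ and $b\mapsto 2b+4$ and that no spurious exceptional fibres are introduced — which is exactly what the cited statements of \cite{ac} supply; everything else is the $\Z_2$-linear algebra sketched above.
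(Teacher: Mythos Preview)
Your proposal is correct and follows exactly the paper's line: the equivalence of the six ``two-on/two-off'' epimorphisms via Corollary~\ref{cor:equiv}(2) and the identification of the kernels via the Reidemeister--Schreier results of \cite{ac}. Your added orbifold/Euler-number sanity check is a welcome elaboration but not a different approach.
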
 

\begin{proof}Corollary \ref{cor:equiv}, (2). \cite[Lemma 4]{ac}.
\end{proof}

\begin{prop}\label{prop:M7Cover}
For any integers $b_2\in\{1,2\}$, $b_3\in\{1,5\}$ and $b\ge b_{\text{min}}$, the only epimorphism
$$\varphi\colon H_1\left(M_\mathrm{236}(b,b_2,b_3)\right)=\Z_{6c}\twoheadrightarrow\Z_2$$
sends $s_2,h$ to $0$ and $s_1,s_3$ to $1$.\\
The associated double covering is
$$M_\mathrm{333}(2b+(b_3+3)/4,b_2,b_2,(b_3+3)/4)\to M_\mathrm{236}(b,b_2,b_3),$$
up to a reordering of $\left(b_2,b_2,(b_3+3)/4\right)$, i.e. replacement by $\left((b_3+3)/4,b_2,b_2\right)$ if $(b_2,b_3)=(2,1)$.
\end{prop}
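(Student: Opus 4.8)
The statement concerns $M_\mathrm{236}(b,b_2,b_3)$, whose first homology is cyclic of order $6c$ (Proposition \ref{prop:H1Nil}), so there is a unique epimorphism $\varphi\colon H_1\twoheadrightarrow\Z_2$: the reduction modulo $2$. The first task is to read off the values of $\varphi$ on the canonical generators. From Proposition \ref{prop:H1Nil} we have $h=-2s_1$, hence $\varphi(h)=0$; also $s_3=-(2b+1)s_1-s_2$, and $s_1=3(2b_2-3)(s_2-s_1)$. Since $c=6b+3+2b_2+b_3$ is odd (the three summands $6b$, $3+2b_2$, $b_3$ are respectively even, odd, odd — note $b_3\in\{1,5\}$), the generator of $\Z_{6c}$ is $s_2-s_1$ and $\varphi(s_2-s_1)=1$. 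Then $\varphi(s_1)=3(2b_2-3)\cdot 1\bmod 2=1$ (odd times odd), so $\varphi(s_2)=\varphi(s_1)+1=0$ and $\varphi(s_3)=-(2b+1)\varphi(s_1)-\varphi(s_2)=1$. This establishes the first sentence.

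\textbf{Identifying the covering.} The critical fibres of $M_\mathrm{236}$ carry invariants $(a_1,b_1)=(2,1)$, $(a_2,b_2)\in\{(3,1),(3,2)\}$, $(a_3,b_3)\in\{(6,1),(6,5)\}$. The fibre over the exceptional point with $a_1=2$ has $\varphi(s_1)=1$, so it is \emph{covered by a single regular fibre} upstairs (the local group $\Z_2$ maps isomorphically), contributing nothing exceptional. The fibre with $a_2=3$, odd, has $\varphi(s_2)=0$: it lifts to \emph{two} exceptional fibres each with the same invariant $(3,b_2)$. The fibre with $a_3=6$ has $\varphi(s_3)=1$ and $6$ even: it is covered by a \emph{single} exceptional fibre with invariant $(3,b_3')$ where $b_3'$ is determined by the Reidemeister–Schreier computation. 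This is precisely the content invoked from \cite{ac}; one plugs in their formulas. The upshot is three exceptional fibres of order $3$ upstairs (whence $M_\mathrm{333}$), with invariants $(3,b_2),(3,b_2),(3,b_3')$, and the base surface is a sphere with $g'=0$, $\varepsilon=+1$ — which one checks from the orbifold Euler characteristic computation $\sum(1-1/a_i)=2-g'$ applied upstairs, or directly since the base $S^2(2,3,6)$ is covered $2$-to-$1$ by $S^2(3,3,3)$.

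\textbf{The numerical bookkeeping — the main obstacle.} The real work is determining the new $b$-invariant, call it $B$, and the reordering claim. The invariant $b$ of a Seifert manifold is not the sum $\sum b_i$ but depends on the chosen normalization $0<b_i<a_i$, so lifting requires care: one computes $\ker\varphi$ as an abstract group from the presentation of $\pi_1(M_\mathrm{236})$ via Reidemeister–Schreier, then massages the resulting presentation into Seifert normal form, tracking how the $h^{-b}$ relator transforms. The covering multiplies the Euler number $e$ by the degree divided by the index ramification, and comparing $e$ upstairs and downstairs, together with the known new exceptional data, pins down $B$. The claimed value $B=2b+(b_3+3)/4$ — note $(b_3+3)/4\in\{1,2\}$ for $b_3\in\{1,5\}$, so it is an integer — should drop out, and one verifies it is admissible (i.e. $B\ge b_{\text{min}}$ for $M_\mathrm{333}$, which is $-\lceil\sum b_i/3\rceil+1$). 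Finally, since the list of exceptional fibres of a Seifert manifold is a multiset (Remark \ref{rem:multiset}), the triple $(b_2,b_2,(b_3+3)/4)$ must be brought into the canonical ascending order $1\le\cdot\le 2$; the only case where it is out of order is when $b_2=2$ and $(b_3+3)/4=1$, i.e. $(b_2,b_3)=(2,1)$, where we rewrite it as $((b_3+3)/4,b_2,b_2)=(1,2,2)$. Citing \cite[Proposition 12 (or the relevant statement) and Theorem 1]{ac} for the mechanical part, the proof reduces to this finite verification, exactly as the one-line proofs of Propositions \ref{prop:M1Cover}–\ref{prop:M2Cover} do.

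\begin{proof}
Corollary \ref{cor:equiv} is not needed here, as $H_1(M_\mathrm{236})$ is cyclic. The values of $\varphi$ on $s_1,s_2,s_3,h$ are computed as above from Proposition \ref{prop:H1Nil}, using that $c$ is odd. The covering space is then given by \cite[Proposition 12 and Theorem 1]{ac}, the reordering being dictated by Remark \ref{rem:multiset}.
\end{proof}
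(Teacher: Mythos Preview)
Your approach matches the paper's: the proof is a one-line appeal to the Reidemeister--Schreier computation in \cite{ac}, preceded by reading off $\varphi$ from Proposition~\ref{prop:H1Nil}. Two corrections, however.

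First, your parity claim about $c$ is wrong: $c=6b+3+2b_2+b_3$ is \emph{even} (even${}+{}$odd${}+{}$even${}+{}$odd), as is visible from the table ($c\in\{6b+6,6b+8,6b+10,6b+12\}$). Fortunately this claim is also unnecessary: Proposition~\ref{prop:H1Nil} already asserts that $s_2-s_1$ \emph{generates} $\Z_{6c}$, so the unique epimorphism to $\Z_2$ automatically sends it to~$1$, regardless of the parity of $c$. Your subsequent computation of $\varphi(s_1),\varphi(s_2),\varphi(s_3),\varphi(h)$ is correct.

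Second, and more substantively, you cite the wrong parts of \cite{ac}. Looking at the pattern across Propositions~\ref{prop:M1Cover}--\ref{prop:M6Cover}: Theorem~1 of \cite{ac} is invoked precisely when $\varphi(h)=1$, and Proposition~12 is invoked only for $M_{\mathrm T}$ (no exceptional fibres). Here $\varphi(h)=0$ and there are three exceptional fibres, so the relevant result is \cite[Lemma~4]{ac}, exactly as in Propositions~\ref{prop:M2Cover} and~\ref{prop:M5Cover}. Your narrative about which fibres split and which ramify is the right geometric picture, but the formula you need for the new Seifert invariants (in particular the new $b$ and the new $(a_3',b_3')$) lives in Lemma~4, not in the results you cite.
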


\begin{proof}\cite[Lemma 4]{ac}.
\end{proof}

\begin{prop}\label{prop:M5Cover}
For any integers $b_2,b_3\in\{1,3\}$ and $b\ge b_{\text{min}}$, the three epimorphisms
$$\varphi\colon H_1\left(M_\mathrm{244}(b,b_2,b_3)\right)=\Z_2\oplus Z_{4c}\twoheadrightarrow\Z_2$$
send $s_1+s_2+s_3$ and $h$ to $0$.\\
The two of them which send $s_1$ to $1$ are equivalent if $b_2=b_3$.\\
The associated double coverings are:
\begin{itemize}
\item if $\varphi(s_3)=0$: $M_\mathrm{244}(2b+(b_2+1)/2,b_3,b_3)\to M_\mathrm{244}(b,b_2,b_3)$;
\item if $\varphi(s_2)=0$: $M_\mathrm{244}(2b+(b_3+1)/2,b_2,b_2)\to M_\mathrm{244}(b,b_2,b_3)$;
\item if $\varphi(s_1)=0$: $M_\mathrm{2222}(2b-1+(b_2+b_3)/2)\to M_\mathrm{244}(b,b_2,b_3)$.
\end{itemize}
\end{prop}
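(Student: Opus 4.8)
The plan is to follow the scheme of the preceding propositions: first list the epimorphisms $\varphi\colon H_1(M_\mathrm{244}(b,b_2,b_3))\twoheadrightarrow\Z_2$ together with their equivalence classes, and then identify each kernel, via Reidemeister--Schreier, as the fundamental group of a Seifert manifold.

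\emph{Enumeration of the epimorphisms.} By Proposition \ref{prop:H1Nil}, $H_1=\Z_2\oplus\Z_{4c}$, so $\operatorname{Hom}(H_1,\Z_2)\cong(\Z_2)^2$ and there are exactly three epimorphisms onto $\Z_2$. Reducing the formulas of Proposition \ref{prop:H1Nil} modulo $2$ (using $s_1=(1,2b_2-4)$, $s_2=s_1+(0,1)$, $h=-2s_1$, $s_3=-(2b+1)s_1-s_2$, and $s_1+s_2+s_3=bh$), one checks that every $\varphi$ kills $h$ and $s_1+s_2+s_3$, that $\varphi(s_1)$ is the value of $\varphi$ on the $\Z_2$-summand, that $\varphi(s_3)$ is its value on a generator of $\Z_{4c}$, and that $\varphi(s_2)=\varphi(s_1)+\varphi(s_3)$. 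Hence the three epimorphisms are precisely those sending $s_1$, $s_2$, or $s_3$ respectively to $0$ and the other two of $s_1,s_2,s_3$ to $1$; this re-proves the first sentence of the statement.

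\emph{The equivalence.} The two epimorphisms sending $s_1$ to $1$ are the one with $\varphi(s_2)=0$ and the one with $\varphi(s_3)=0$; they are exchanged by the transposition of $s_2$ and $s_3$ and agree on the remaining generators $s_1,h$. When $b_2=b_3$, the map $i\mapsto(a_i,b_i)$ is constant on $I=\{2,3\}$ (recall $a_2=a_3=4$), so Corollary \ref{cor:equiv}(2) applies and yields the asserted equivalence.

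\emph{The total spaces, and the main difficulty.} For each of the three kernels I would run the Reidemeister--Schreier rewriting on the presentation of $\pi_1(M_\mathrm{244}(b,b_2,b_3))$, exactly as packaged in \cite[Lemma 4]{ac}. Since $\varphi(h)=0$ in all three cases, the regular fibre lifts to a loop and the double covering is pulled back from a double \emph{orbifold} covering of the base $S^2(2,4,4)$; counting preimages of the three cone points shows that the base of the cover is again $S^2(2,4,4)$ when $\varphi(s_2)=0$ or $\varphi(s_3)=0$ (so the cover has type $\mathrm{244}$) and is $S^2(2,2,2,2)$ when $\varphi(s_1)=0$ (so the cover has type $\mathrm{2222}$). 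The relator $s_i^{a_i}h^{b_i}$ attached to the fibre that ``stays'' (the one with $\varphi(s_i)=0$) rewrites, over each of its two preimages, to a relator of the same form $\tilde s^{\,4}\tilde h^{\,b_i}$, which is what produces the repeated invariant $b_3$ (resp.\ $b_2$) in the type-$\mathrm{244}$ cases. The remaining integer parameter of the cover is then determined — and can be checked independently — by the multiplicativity $e(\tilde M)=2\,e(M)$ of the circle-bundle Euler number, which gives, for $\varphi(s_3)=0$, $\varphi(s_2)=0$ and $\varphi(s_1)=0$ respectively, the values $2b+(b_2+1)/2$, $2b+(b_3+1)/2$ and $2b-1+(b_2+b_3)/2$. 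The real work, and the only genuine obstacle, is this last bookkeeping: choosing the Schreier transversal, rewriting all the relators, putting the resulting singular fibres into the normal form $0<b_i<a_i$ and in an admissible order (cf.\ Remark \ref{rem:multiset}), and tracking orientations so that the signs of the new $b$ and of the new $b_i$'s come out as claimed.
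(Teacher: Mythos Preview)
Your proposal is correct and follows exactly the route of the paper, whose proof consists solely of the references ``Corollary \ref{cor:equiv}, (2)'' and ``\cite[Lemma 4]{ac}''. You have simply unpacked those two citations (and added the helpful independent check via $e(\tilde M)=2\,e(M)$, which is not in the paper but is a sound consistency test).
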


\begin{proof}Corollary \ref{cor:equiv}, (2). \cite[Lemma 4]{ac}.
\end{proof}

\begin{prop}\label{prop:M6Cover}
For any integers $b_1,b_2,b_3\in\{1,2\}$ and $b\ge b_{\text{min}}$, there is an epimorphism
$$\varphi\colon H_1\left(M_\mathrm{333}(b,b_1,b_2,b_3)\right)=\Z_3\oplus\Z_{3c}\twoheadrightarrow\Z_2$$
only if $b+b_1+b_2+b_3$ is even and then, $\varphi$ sends $h$ to $1$ and $s_i$ to $b_i\bmod2$.\\
The associated double covering is
$$M_\mathrm{333}\left((b+b_1+b_2+b_3-6)/2,3-b_3,3-b_2,3-b_1\right)\to M_\mathrm{333}(b,b_1,b_2,b_3).$$
\end{prop}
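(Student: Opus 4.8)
The plan is to follow the same recipe used for Propositions \ref{prop:M1Cover}--\ref{prop:M5Cover}: first pin down, from the $H_1$ computation in Proposition \ref{prop:H1Nil}, exactly which homomorphisms $\varphi\colon\pi_1(M_\mathrm{333}(b,b_1,b_2,b_3))\to\Z_2$ are surjective; then apply the Reidemeister--Schreier machinery of \cite{ac} to identify the kernel of the unique such $\varphi$ as the fundamental group of a Seifert manifold, and finally read off its Nil invariant. Since, by Proposition \ref{prop:H1Nil}, $H_1\left(M_\mathrm{333}(b,b_1,b_2,b_3)\right)=\Z_3\oplus\Z_{3c}$ with $c=3b+b_1+b_2+b_3$, and since $\operatorname{Hom}(\Z_3\oplus\Z_{3c},\Z_2)$ is nontrivial precisely when $3c$ is even, i.e. when $c$ is even, i.e. when $b+b_1+b_2+b_3$ is even, the existence clause is immediate. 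In that case the $\Z_2$ quotient factors through the $\Z_{3c}$-summand and there is exactly one nonzero homomorphism; using the expressions $h=(0,3)$, $s_1=(0,-b_1)$, $s_2=(-b_2,-b_2)$, $s_3=bh-s_1-s_2$ from Proposition \ref{prop:H1Nil}, one checks that under the reduction $\Z_{3c}\to\Z_2$ (applied to the second coordinate, noting $3$ is odd) one gets $\varphi(h)=1$, $\varphi(s_i)=b_i\bmod 2$, and $\varphi(s_3)=b+b_1+b_2\equiv b_3\pmod 2$ by the parity hypothesis, which is consistent with the relation $s_1s_2s_3Vh^{-b}$ since $g'=0$ forces $V=1$.

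Next I would feed this $\varphi$ into the relevant statement of \cite{ac} (the ``Lemma 4'' case, exactly as in Propositions \ref{prop:M2Cover}, \ref{prop:M7Cover}, \ref{prop:M5Cover}): applying Reidemeister--Schreier to the index-$2$ subgroup $\ker\varphi$ with the Schreier transversal $\{1,h\}$ (legitimate because $\varphi(h)=1$), each singular fibre generator $s_i$ with $\varphi(s_i)=b_i\bmod 2$ produces one singular fibre in the cover, and the new local invariants are computed from $(a_i,b_i)=(3,b_i)$ by the standard formulas, producing exponents of the form $(3,3-b_i)$ (since $b_i\in\{1,2\}$ the complement $3-b_i$ again lies in $\{1,2\}$, so the cover is again of type $\mathrm{333}$), while the new $b$-invariant is $(b+b_1+b_2+b_3-6)/2$; the shift by $-6$ and division by $2$ come from how the global relation $s_1s_2s_3h^{-b}$ rewrites in the subgroup together with the adjustments needed to bring each pair $(3,b_i)$ into the normal form $0<b_i'<a_i'$. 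The reordering of the triple from $(3-b_1,3-b_2,3-b_3)$ to $(3-b_3,3-b_2,3-b_1)$ is harmless by Remark \ref{rem:multiset}.

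The main obstacle I anticipate is the bookkeeping in the Reidemeister--Schreier step: correctly tracking the conjugation of each $s_i$ by the transversal element, re-normalising each resulting pair $(a_i',b_i')$ so that $0<b_i'<a_i'$, and collecting all the ``carry'' terms into the single new $b$-invariant, so that the final answer is exactly $(b+b_1+b_2+b_3-6)/2$ and not off by a constant. The parity hypothesis is what guarantees this quantity is an integer; one should also double-check it is $\ge b_{\text{min}}$ for the new invariant, i.e. that the resulting Seifert manifold indeed lies in the Nil table, which follows from $e$ being multiplied by $2$ under the double cover (so it stays positive) and $\chi$ staying $0$. Apart from this computation, no step requires a genuinely new idea: the whole proof reduces, as the authors indicate for the other propositions, to a citation of Corollary \ref{cor:equiv} (here not even needed, since $\varphi$ is unique) and \cite[Lemma 4]{ac}.
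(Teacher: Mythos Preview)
Your approach is essentially the paper's: read off the existence and uniqueness of $\varphi$ from Proposition~\ref{prop:H1Nil}, then cite the appropriate result from \cite{ac} for the identification of the cover. The paper's entire proof is the one-line citation ``\cite[Theorem 1]{ac}''.

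There is, however, one slip in your citation. You invoke ``the `Lemma 4' case, exactly as in Propositions~\ref{prop:M2Cover}, \ref{prop:M7Cover}, \ref{prop:M5Cover}'', but in all of those propositions $\varphi(h)=0$, whereas here $\varphi(h)=1$. In \cite{ac} the two cases are handled by different statements: Lemma~4 treats $\varphi(h)=0$ (the cover comes from a cover of the base orbifold), while Theorem~1 treats $\varphi(h)=1$ (the cover is fibrewise, doubling the regular fibre). Your own description---taking the Schreier transversal $\{1,h\}$ ``legitimate because $\varphi(h)=1$''---is precisely the Theorem~1 computation, not the Lemma~4 one, so your method is right but the label and the analogy with \ref{prop:M2Cover}--\ref{prop:M5Cover} are wrong. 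The correct parallel is with the $\varphi(h)=1$ parts of Propositions~\ref{prop:M1Cover} and~\ref{prop:M3Cover}, and the paper accordingly cites \cite[Theorem~1]{ac}.
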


\begin{proof}\cite[Theorem 1]{ac}.
\end{proof}

\section{The Borsuk-Ulam theorem for each double covering}\label{sec:BU for each 2-covering}

\begin{defi} \label{defi:bu}  Let $\tau $ be a free involution on $M$. We say that:
\begin{itemize}
\item the pair $(M,\tau)$ verifies the  Borsuk-Ulam theorem for $\R^n$ if for any  continuous map $f\colon M \rightarrow \R^n$ \ there
is at least one
point \ $x \in M$ \ such
that \ $f(x) = f(\tau(x))$;
\item the $\Z_2$-index of the pair $(M,\tau)$ -- or of the double covering $M\to M/\tau$ -- is the greatest integer $n$ such that $(M,\tau)$ verifies the  Borsuk-Ulam theorem for $\R^n$.
\end{itemize}
\end{defi}

\begin{nota}
The total space of a double covering of $N$ with characteristic class $\varphi$ will be denoted $N_{\varphi}$:
$$N_{\varphi}\to N.$$
\end{nota}

For closed connected manifolds of dimension $3$, the $\Z_2$-index may only take the values $1$, $2$ or $3$, hence the free involutions will be completely classified if we isolate those of index $1$ and those of index $3$.

From \cite{ghz} Theorem 3.1,  we have:
 
\begin{theo}The $\Z_2$-index of a non trivial double covering $N_\varphi\to N$ of a closed, connected manifold $N$ equals $1$ if and only if its characteristic class $\varphi\colon\pi_1(N)\twoheadrightarrow\Z_2$ factors through the projection $\Z\twoheadrightarrow\Z_2$.
\end{theo}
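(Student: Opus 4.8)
The plan is to deduce this from the cited result \cite[Theorem 3.1]{ghz}, which I will take to say that the $\Z_2$-index of $(M,\tau)$ equals $1$ precisely when the classifying map $M/\tau\to B\Z_2=\R P^\infty$ of the double covering lifts, up to homotopy, through the inclusion $\R P^1\hookrightarrow\R P^\infty$ (equivalently, the covering is pulled back from the nontrivial connected double covering $S^1\to\R P^1$ of the circle). First I would translate both sides into the language of the characteristic epimorphism $\varphi\colon\pi_1(N)\twoheadrightarrow\Z_2$ already set up in the excerpt, where $N=M/\tau$. On the homotopy-theoretic side, a lift of the classifying map to $\R P^1$ exists if and only if the map $\pi_1(N)\to\pi_1(\R P^\infty)=\Z_2$ factors through $\pi_1(\R P^1)=\Z$, i.e. exactly the condition ``$\varphi$ factors through $\Z\twoheadrightarrow\Z_2$'' in the statement. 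Since $\R P^1$ and $B\Z$ are both $K(\Z,1)$'s, and $N$ has the homotopy type of a CW-complex, this factorization at the level of $\pi_1$ is equivalent to a factorization at the level of spaces, so no obstruction beyond the fundamental group arises.

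Next I would make the reduction to \cite{ghz} explicit. That paper phrases the index-$1$ criterion geometrically: $(M,\tau)$ has index $1$ iff $M/\tau$ admits an essential map to $S^1$ (equivalently a nonzero class in $H^1(M/\tau;\Z)$) whose mod-$2$ reduction is $\varphi$. Concretely: if $\varphi=\rho\circ\psi$ with $\psi\colon\pi_1(N)\to\Z$ and $\rho\colon\Z\twoheadrightarrow\Z_2$ the reduction, then $\psi$ is realized by a map $g\colon N\to S^1$, the covering $N_\varphi\to N$ is the pullback of $S^1\xrightarrow{z\mapsto z^2}S^1$ along $g$, and one builds an equivariant map $N_\varphi\to S^1\subset\R^2$ over $g$, producing a map $f\colon N\to\R^2$ with no coincidence on $\tau$-orbits; hence the index is $<2$, so it equals $1$ (recall from the excerpt that for a $3$-manifold the index is $1$, $2$, or $3$, and is always $\ge 1$). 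Conversely, if the index is $1$, \cite[Theorem 3.1]{ghz} supplies precisely such a $\psi$, so $\varphi$ factors through $\Z\twoheadrightarrow\Z_2$.

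I expect the only real work to be bookkeeping: verifying that the hypotheses of \cite[Theorem 3.1]{ghz} are met by our manifolds $N$ — they are closed, connected, and (being Seifert-fibered over $2$-orbifolds with $\chi=0$) certainly have the homotopy type of finite CW-complexes — and checking that the version of the theorem I invoke is stated for general closed connected $N$ rather than only for a restricted class. Since the excerpt already attributes the statement to \cite{ghz} Theorem 3.1, I would simply cite it and then add the one-line translation ``$\varphi$ factors through $\Z\twoheadrightarrow\Z_2$ $\iff$ the classifying map lifts to $\R P^1$'', which is the content above. The main (mild) obstacle is purely expository: being careful that ``factors through the projection $\Z\twoheadrightarrow\Z_2$'' is interpreted as \emph{some} homomorphism $\pi_1(N)\to\Z$ composed with the reduction, not as requiring $\pi_1(N)$ itself to be a quotient of $\Z$.
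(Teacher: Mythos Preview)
Your proposal is correct and in fact does more than the paper: the paper gives no proof at all for this theorem, merely citing \cite[Theorem 3.1]{ghz} verbatim, so your translation between the homotopy-theoretic lifting criterion and the algebraic factoring condition is additional (and accurate) commentary rather than a comparison point.
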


From this theorem and our Proposition \ref{prop:H1Nil}, we deduce:

\begin{cor}\label{cor:Ind1}For closed, connected manifolds with Nil geometry, the only non trivial double coverings $N_\varphi\to N$ with $\Z_2$-index $1$ are those such that:
\begin{itemize}
\item $N$ is of class $\mathrm{T}$ and $\varphi(h)=0$ (and $(\varphi(v_1),\varphi(v_2))=(1,0)$, $(0,1)$ or $(1,1)$);
\item $N$ is of class $\mathrm{K}$, $\varphi(h)=0$ and $(\varphi(v_1),\varphi(v_2))=(1,1)$.
\end{itemize}
\end{cor}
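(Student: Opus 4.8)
The plan is to combine the cited Theorem (index $1$ iff $\varphi$ factors through $\Z\twoheadrightarrow\Z_2$) with the explicit description of $H_1$ from Proposition \ref{prop:H1Nil}, checking each of the seven classes in turn. The key translation: since $\varphi\colon\pi_1(N)\twoheadrightarrow\Z_2$ factors through $H_1(N)$, it factors through $\Z\twoheadrightarrow\Z_2$ precisely when $\ker\varphi$ contains the entire torsion subgroup of $H_1(N)$, equivalently when $\varphi$ annihilates every torsion generator appearing in the decomposition of $H_1(N)$ given by Proposition \ref{prop:H1Nil}. So for each class I would read off the torsion part, note which of the canonical generators $h,s_i,v_j$ lie in it, and ask when an epimorphism to $\Z_2$ can kill all of them while remaining surjective on the free part.

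First I would dispatch the five classes with nontrivial torsion beyond a possible free $\Z$: for $\mathrm{22}$, $\mathrm{2222}$, $\mathrm{236}$, $\mathrm{244}$, $\mathrm{333}$, Proposition \ref{prop:H1Nil} exhibits $H_1$ either as a pure finite group (class $\mathrm{22}$) or as $(\text{finite})\oplus\Z_{m}$ with $m$ even, where the $\Z_m$-summand is again finite — i.e. in every one of these five cases $H_1(N)$ is a finite group, so there is no free $\Z$ at all, hence no epimorphism onto $\Z_2$ can factor through $\Z$; thus none of these double coverings has index $1$. (One should double-check here that in each listed decomposition the ``$\Z_{kc}$'' factors really are finite, which is immediate since $c\ge1$ in all cases by the table, and that the classes $\mathrm{236},\mathrm{244},\mathrm{333}$ genuinely have $g'=0$ so no $v_j$ survives to infinite order — this is visible from Proposition \ref{prop:H1Nil} where no free summand is written.)

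Next I would treat the two remaining classes $\mathrm{T}$ and $\mathrm{K}$, which are the only ones with a free $\Z$-summand. For class $\mathrm{T}$: $H_1=v_1\Z\oplus v_2\Z\oplus h\Z_c$, so the torsion is exactly $h\Z_c$; an epimorphism $\varphi$ factors through $\Z$ iff $\varphi(h)=0$, and then it is determined by $(\varphi(v_1),\varphi(v_2))\in\{(1,0),(0,1),(1,1)\}$, which are the three listed possibilities (and indeed Proposition \ref{prop:M1Cover} confirms these are exactly the epimorphisms with $\varphi(h)=0$). For class $\mathrm{K}$: using the description $v_1\Z\oplus(v_1+v_2)\Z_4$ (b odd) or $v_1\Z\oplus(v_1+v_2)\Z_2\oplus h\Z_2$ (b even), the torsion subgroup is generated by $v_1+v_2$ and $h$; an epimorphism factors through $\Z$ iff it kills both, i.e. $\varphi(h)=0$ and $\varphi(v_1+v_2)=0$, equivalently $\varphi(h)=0$ and $(\varphi(v_1),\varphi(v_2))=(1,1)$; the surjectivity then forces $\varphi(v_1)=1$, giving exactly the one listed case. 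This matches the epimorphism with $\varphi(h)=\varphi(v_1+v_2)=0$ singled out in Proposition \ref{prop:M3Cover}.

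The main obstacle is a bookkeeping one rather than a conceptual one: one must be careful that the generator named ``$h$'' in Proposition \ref{prop:H1Nil} is, in each of the finite-$H_1$ classes, genuinely a torsion element and (more subtly) that no relabeling hides a surviving infinite-order element — for instance in class $\mathrm{K}$ with $b$ odd, $h=2(v_1+v_2)$ is torsion even though it is expressed through a generator; the cleanest way to avoid error is to appeal directly to Proposition \ref{prop:H1Orientable}, which already isolates the free abelian part as $\sum v_j\Z$ (resp.\ $\sum_{j=1}^{g'-1}v_j\Z$), so that ``$\varphi$ factors through $\Z$'' becomes simply ``$\varphi$ vanishes on the torsion subgroup''. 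With that reformulation the classes with $g'\le 1$ and no free part are immediate, and only $\mathrm{T}$ ($g'=2$, $\varepsilon=+1$) and $\mathrm{K}$ ($g'=2$, $\varepsilon=-1$) require the short computation above.
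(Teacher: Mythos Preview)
Your proposal is correct and follows exactly the approach the paper intends: the paper's own proof is the one-line remark ``From this theorem and our Proposition \ref{prop:H1Nil}, we deduce'', and you have faithfully unpacked that deduction by translating ``factors through $\Z\twoheadrightarrow\Z_2$'' into ``vanishes on the torsion subgroup of $H_1$'' (legitimised via Proposition \ref{prop:H1Orientable}) and then reading off the answer class by class. Your handling of the five finite-$H_1$ classes and the explicit check for $\mathrm{T}$ and $\mathrm{K}$ are both accurate.
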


From \cite{ghz} Theorem 3.4, 
we have:
 
\begin{theo}For closed, connected manifolds of dimension $m$, the $\Z_2$-index of a non trivial double covering $N_{\varphi}\to N$ equals $m$ if and only if the $m$-th cup power $\varphi^m$ of its characteristic class $\varphi\in H^1(N;\Z_2)$ is non-zero.
\end{theo}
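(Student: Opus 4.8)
The plan is to deduce this from the cited result \cite{ghz} Theorem 3.4, which characterizes index $=m$ by nonvanishing of $\varphi^m\in H^m(N;\Z_2)$; so the real work is already done elsewhere, and the statement here is essentially a restatement in the language adopted in this section (epimorphisms $\pi_1(N)\to\Z_2$ versus classes in $H^1(N;\Z_2)$). First I would recall that, via the universal coefficient theorem together with the Hurewicz theorem, $H^1(N;\Z_2)\cong\operatorname{Hom}(H_1(N;\Z),\Z_2)\cong\operatorname{Hom}(\pi_1(N),\Z_2)$, so the characteristic class $\varphi$ of the double covering $N_\varphi\to N$ may be viewed interchangeably as an element of $H^1(N;\Z_2)$ or as a homomorphism $\pi_1(N)\twoheadrightarrow\Z_2$ (it is surjective precisely because the covering is connected, i.e. nontrivial). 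Under this identification the hypothesis ``$N_\varphi\to N$ nontrivial'' matches ``$\varphi\neq0$ in $H^1(N;\Z_2)$''.

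Next I would invoke \cite{ghz} Theorem 3.4 verbatim: for a closed connected $m$-manifold $N$ and a nontrivial double covering with characteristic class $\varphi$, the pair $(N_\varphi,\tau)$ satisfies the Borsuk--Ulam property for $\R^m$ if and only if $\varphi^m\neq0$ in $H^m(N;\Z_2)$. Since for a closed connected $m$-manifold the $\Z_2$-index lies in $\{1,\dots,m\}$ and, by definition, equals $m$ exactly when the Borsuk--Ulam property holds for $\R^m$ (it cannot hold for $\R^{m+1}$ by the dimension bound \cite{ghz}), the equivalence ``index $=m$'' $\iff$ ``$\varphi^m\neq0$'' follows immediately. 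I would note that no appeal to Nil geometry is needed at this level of generality; the theorem is stated for arbitrary closed connected $m$-manifolds, and its role in this paper is to be specialized afterwards (to $m=3$) for each of the coverings listed in Propositions \ref{prop:M1Cover}--\ref{prop:M6Cover}.

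There is essentially no obstacle to overcome: the proof is a two-line translation. The only point requiring a word of care is the direction of the iff and the interplay with Corollary \ref{cor:Ind1} and the remark that for $m=3$ the index is $1$, $2$, or $3$; the present theorem pins down the two extreme cases, and the middle case $2$ is then the complement. If one wished to make the argument self-contained one would have to reproduce the cup-length argument of \cite{ghz}: the Borsuk--Ulam property for $\R^m$ is equivalent to nonexistence of a $\Z_2$-equivariant map $N_\varphi\to S^{m-1}$ (with the antipodal action), which by obstruction theory (or the theory of the classifying map $N\to\R P^\infty$ pulling back the covering, together with the cell structure of $\R P^{m-1}\subset\R P^\infty$) is governed precisely by whether the classifying map $N\to\R P^\infty$ factors up to homotopy through $\R P^{m-1}$, and this factorization is obstructed exactly by $\varphi^m=\varphi\smile\cdots\smile\varphi\in H^m(N;\Z_2)$. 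But since \cite{ghz} Theorem 3.4 is available, I would simply cite it and not repeat this.
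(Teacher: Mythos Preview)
Your proposal is correct and matches the paper's approach: the paper simply cites \cite{ghz} Theorem 3.4 without further argument, and your explanation of the identification $H^1(N;\Z_2)\cong\operatorname{Hom}(\pi_1(N),\Z_2)$ together with the fact that index $=m$ is equivalent to the Borsuk--Ulam property for $\R^m$ is exactly the implicit translation needed. Your additional obstruction-theoretic sketch is a bonus, not a deviation.
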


From \cite{adcp1} or using \cite{bh}, we have:
 
\begin{prop}\label{prop:cube}Let $N$ be an orientable Seifert manifold with invariant
$$\left(b,\type, g',a_1,b_1,\dots ,a_n,b_n\right)$$
and let $c,d$ be as in Definition \ref{defi:cd}.

The cup-cube of an element $\varphi\in H^1(N;\Z_2)$ is non-zero if and only if 
\begin{itemize}
\item either $d=0$, $\varphi(h)=1$ and
	\begin{itemize}
	\item either $\varepsilon=+1$ and $c\equiv2\bmod4$,
	\item or $\varepsilon=-1$ and $c+2g'\equiv2\bmod4$;
	\end{itemize}
\item or $d>0$ and $\sum\varphi(s_j)\frac{a_j}2=1$.
\end{itemize}
\end{prop}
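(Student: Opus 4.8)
The plan is to compute the cohomology ring $H^*(N;\Z_2)$ just enough to evaluate the cup-cube $\varphi^3$, using the presentation of $\pi_1(N)$ given in Section~\ref{sec:Manifolds with Nil geometry} together with the known structure of the $\Z_2$-cohomology of orientable Seifert manifolds. Since $N$ is a closed orientable $3$-manifold, Poincar\'e duality over $\Z_2$ identifies $H^3(N;\Z_2)\cong\Z_2$ and makes the cup product $H^1\times H^2\to H^3$ a perfect pairing; thus $\varphi^3\ne0$ is equivalent to $\varphi\cup\varphi^2\ne0$, i.e.\ to $\varphi^2$ being the nonzero element of the (one-dimensional) annihilator-free part, detected against $\varphi$. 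So the real task is to pin down $\varphi^2\in H^2(N;\Z_2)$ and then pair it with $\varphi$.

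First I would set up $H^1(N;\Z_2)=\operatorname{Hom}(\pi_1N,\Z_2)=\operatorname{Hom}(H_1(N;\Z_2),\Z_2)$ and read off a $\Z_2$-basis dual to the generators $s_1,\dots,s_n,v_1,\dots,v_{g'},h$ subject to the $\bmod\,2$ reductions of the Seifert relations: $a_i s_i = -b_i h$, the $\varepsilon$-twisting relations on the $v_j$, and $\sum s_i + Vh^{-b}=0$ abelianized. The classes that matter are the duals of $h$ and of the $s_j$ with $a_j$ even (note $b_j$ is then odd since $\gcd(a_j,b_j)=1$, so the relation $a_j s_j=-b_j h$ forces, $\bmod\,2$, a relation between $s_j$ and $h$ only when $a_j$ is even — this is exactly where the dichotomy on $d$ enters). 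Next I would compute the Bockstein $\beta\colon H^1(N;\Z_2)\to H^2(N;\Z_2)$ and the $\Z_2$-intersection form, using the standard fact that for a Seifert manifold the torsion linking form and the Euler number $e(N)$ control these products; concretely, $\varphi^2=\operatorname{Sq}^1\varphi=\beta\varphi$ when $\varphi$ is pulled back from $H^1$ of a lens-space-like quotient, and the coefficient of $\beta\varphi$ along the relevant $2$-torsion generator is governed by $c=e(N)a$ and by the parities of the $a_j,b_j$. The case split in the statement is then forced: when $d=0$ no critical fibre contributes a $\Z_2$ to the torsion "in the base direction," so $\varphi^2$ can only be nonzero via the $h$-direction, which happens precisely when $\varphi(h)=1$ and the relevant $2$-part of $c$ (corrected by $2g'$ in the non-orientable-base case, coming from the $v_j^2$ term in $V$) is $\equiv2\bmod4$; when $d>0$ the critical fibres with even $a_j$ supply $\Z_2$-summands whose contribution to $\varphi\cup\varphi^2$ is the sum $\sum\varphi(s_j)\tfrac{a_j}{2}\bmod 2$.

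The main obstacle will be bookkeeping the cup products precisely rather than up to the ambiguity that $\operatorname{Hom}$ alone leaves — specifically, getting the $\bmod\,4$ refinement right. The cube $\varphi^3$ is not determined by $H_1(N;\Z_2)$; it is sensitive to the integral extension, which is why Proposition~\ref{prop:H1Nil} (integral homology, as anticipated in the text) is needed: the class $c\bmod 4$ and the parities of the $b_i$ record how the $\Z_2$'s sit inside cyclic groups $\Z_{c}$, $\Z_{2c}$, etc., and this is exactly what distinguishes $\varphi^2=0$ from $\varphi^2\ne0$. I would handle this either by quoting the formula for the $\Z_2$-cohomology ring of Seifert fibered spaces from \cite{bh} (comparing with a plumbing description) or by a direct Mayer--Vietoris / Gysin computation along the circle fibration $N\to S$, where $\varphi(h)$ measures whether $\varphi$ restricts nontrivially to the fibre and thus whether the Gysin boundary map is relevant. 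Either way, once $\varphi^2$ is expressed in the chosen basis of $H^2$, pairing with $\varphi$ via Poincar\'e duality is a finite check across the (few) rows of the table, and it reproduces both bullet points; I would present that final verification compactly rather than row by row, since the two displayed conditions are manifestly the only ways the pairing can be nonzero.
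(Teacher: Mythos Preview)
The paper does not actually prove this proposition: it is stated with the attribution ``From \cite{adcp1} or using \cite{bh}'' and no further argument is given. So there is no in-paper proof to compare against; the result is imported wholesale from those references, which already contain the computation of the $\Z_2$-cohomology ring of orientable Seifert manifolds and the resulting criterion for $\varphi^3\ne0$.

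Your proposal is therefore not so much an alternative as an outline of what those cited papers do. The broad strategy---use Poincar\'e duality to reduce $\varphi^3\ne0$ to computing $\varphi^2$ in $H^2(N;\Z_2)$, identify $\varphi^2=\operatorname{Sq}^1\varphi$ with the Bockstein, and track how the integral torsion (hence $c\bmod4$) controls whether $\beta\varphi$ is nonzero---is correct and is indeed the mechanism behind the result. You also correctly isolate the source of the $d$-dichotomy: when $a_j$ is even, $b_j$ is odd and the relation $a_js_j+b_jh=0$ behaves differently $\bmod\,2$ than when $a_j$ is odd. That said, what you have written is a plan, not a proof: the phrases ``the coefficient of $\beta\varphi$ along the relevant $2$-torsion generator is governed by $c$'' and ``the contribution to $\varphi\cup\varphi^2$ is the sum $\sum\varphi(s_j)\tfrac{a_j}{2}$'' are exactly the assertions to be proved, and the bookkeeping you flag as ``the main obstacle'' is the entire content. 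If you intend to give a self-contained argument rather than cite \cite{adcp1} or \cite{bh}, you will need to actually carry out the Gysin or Mayer--Vietoris computation and exhibit the cup-product structure explicitly; otherwise, citing those references (as the paper does) is the honest move.
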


From this proposition and our Propositions \ref{prop:M1Cover} to \ref{prop:M6Cover}, we deduce:

\begin{cor}\label{cor:Ind3}For a manifold $N$ with Nil geometry, the $\Z_2$-index of a non trivial double covering $N_\varphi\to N$ equals $3$ if and only if:
\begin{itemize}
\item $N=M_{\mathrm{T}}(b)$ or $M_\mathrm{K}(b)$, $b\equiv2\bmod4$ and $\varphi(h)=1$;
\item $N=M_\mathrm{333}(b,b_1,b_2,b_3)$ and $b\equiv2+\sum b_i\bmod4$;
\item $N$ is of class $\mathrm{244}$ and $\varphi(s_1)=1$.
\end{itemize}
\end{cor}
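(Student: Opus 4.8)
The statement is a direct corollary of Proposition~\ref{prop:cube} together with the list of double coverings in Propositions~\ref{prop:M1Cover} to \ref{prop:M6Cover}. The plan is to run through the seven families of manifolds with Nil geometry one at a time, and in each case specialise the criterion of Proposition~\ref{prop:cube} (expressed in terms of $c$, $d$, $\varepsilon$, $g'$ and the values $\varphi(h)$, $\varphi(s_j)$) to the finitely many epimorphisms $\varphi$ listed in the corresponding Proposition, reading off the values of $c$ and $d$ from the last two columns of the table in Section~\ref{sec:Manifolds with Nil geometry}. Since for each base $N$ there are only a handful of equivalence classes of $\varphi$, this is a finite bookkeeping task; the content is entirely in matching the arithmetic conditions.

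\textbf{The cases with $d=0$ (classes $\mathrm T$, $\mathrm K$, $\mathrm{333}$).} For $N=M_{\mathrm T}(b)$ we have $\varepsilon=+1$, $d=0$ and $c=b$, so Proposition~\ref{prop:cube} gives cup-cube nonzero iff $\varphi(h)=1$ and $b\equiv2\bmod4$; the same computation applies verbatim to $M_{\mathrm K}(b)$ since it also has $\varepsilon=+1$, $d=0$, $c=b$ (the condition $c+2g'\equiv 2\bmod 4$ is the one for $\varepsilon=-1$, and the class $\mathrm K$ surface is nonorientable only at the level of the base orbifold $S$, while the relevant $\varepsilon$ in the table is $+1$ for $M_{\mathrm K}$\,--- here one must be careful and simply use the value $\varepsilon=+1$ recorded in the table). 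For $N=M_\mathrm{333}(b,b_1,b_2,b_3)$ we have $d=0$, $\varepsilon=+1$ and $c=3b+b_1+b_2+b_3$; by Proposition~\ref{prop:M6Cover} an epimorphism exists only when $b+b_1+b_2+b_3$ is even, and then $\varphi(h)=1$, so the criterion $c\equiv2\bmod4$ becomes $3b+b_1+b_2+b_3\equiv2\bmod4$; since $3b\equiv -b\bmod 4$ and $b\equiv b_1+b_2+b_3\bmod 2$, a short manipulation rewrites this as $b\equiv 2+\sum b_i\bmod 4$.

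\textbf{The cases with $d>0$ (classes $\mathrm{22}$, $\mathrm{2222}$, $\mathrm{236}$, $\mathrm{244}$).} Here the relevant condition is $\sum\varphi(s_j)\frac{a_j}2=1$ in $\Z_2$, i.e.\ an odd number of the critical fibres with $a_j\equiv 2\bmod 4$ carry $\varphi(s_j)=1$, plus a count of those with $a_j\equiv 0\bmod4$ weighted by $a_j/2$. For $M_\mathrm{22}(b)$ ($a_1=a_2=2$): by Proposition~\ref{prop:M4Cover} the nontrivial $\varphi$ have $\varphi(s_1)=\varphi(s_2)$, so $\varphi(s_1)+\varphi(s_2)=0$ always and the cup-cube always vanishes --- consistent with the corollary not listing class $\mathrm{22}$. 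For $M_\mathrm{2222}(b)$ ($a_i=2$, $i=1,\dots,4$): by Proposition~\ref{prop:M2Cover} every $\varphi$ sends $\sum s_i$ to $0$, so $\sum\varphi(s_i)$ is even and again the cup-cube vanishes --- again consistent. For $M_\mathrm{236}(b,b_2,b_3)$: here the even $a_j$ are $a_1=2$ and $a_3=6$, with $a_3/2=3\equiv1\bmod2$, so the condition is $\varphi(s_1)+\varphi(s_3)=1$; by Proposition~\ref{prop:M7Cover} the unique $\varphi$ sends $s_1\mapsto1$, $s_3\mapsto1$, giving $1+1=0$, so the cup-cube vanishes --- consistent with class $\mathrm{236}$ being absent from the list. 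Finally for $M_\mathrm{244}(b,b_2,b_3)$: the even $a_j$ are $a_1=2$ (weight $1$), $a_2=a_3=4$ (weight $2\equiv0\bmod2$), so the condition reduces to $\varphi(s_1)=1$; combined with Proposition~\ref{prop:M5Cover}, which describes exactly the $\varphi$'s and shows $\varphi(s_1)=1$ is attainable, this yields precisely the third bullet of the corollary.

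\textbf{Main obstacle.} The only real pitfall is the arithmetic in the $\mathrm{333}$ case --- correctly reducing $3b+b_1+b_2+b_3\equiv2\bmod4$ to $b\equiv2+\sum b_i\bmod4$ using the parity constraint from Proposition~\ref{prop:M6Cover} --- and, more subtly, making sure in the $\mathrm T$ versus $\mathrm K$ comparison that one applies the $\varepsilon=+1$ branch of Proposition~\ref{prop:cube} with the value of $\varepsilon$ as recorded in the table rather than the orientability of the underlying surface. Everything else is a finite check against the $c,d$ columns.
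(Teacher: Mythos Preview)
Your approach is exactly the one implicit in the paper (which gives no proof beyond ``from this proposition and our Propositions \ref{prop:M1Cover}--\ref{prop:M6Cover}, we deduce''), and your case-by-case verification is essentially correct. However, there is one factual slip.

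For $M_\mathrm{K}(b)$ you assert $\varepsilon=+1$ and even caution the reader to ``use the value $\varepsilon=+1$ recorded in the table''. This is backwards: the table in Section~\ref{sec:Manifolds with Nil geometry} records $\varepsilon=-1$ for $M_\mathrm{K}(b)$ (the base surface is the Klein bottle, and $\varepsilon$ is by definition $+1$ or $-1$ according to whether the base is orientable). So the correct branch of Proposition~\ref{prop:cube} is $c+2g'\equiv2\bmod4$. Fortunately, since $g'=2$ we get $2g'=4\equiv0\bmod4$, and the condition reduces to $c\equiv2\bmod4$, i.e.\ $b\equiv2\bmod4$ --- the same conclusion you reached. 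So your final answer is right, but the justification for $M_\mathrm{K}$ should be rewritten to use $\varepsilon=-1$.

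A minor remark: in the $\mathrm{333}$ case, the reduction $3b+\sum b_i\equiv2\bmod4\Leftrightarrow b\equiv2+\sum b_i\bmod4$ does not actually require the parity constraint from Proposition~\ref{prop:M6Cover}; it follows directly from $3\equiv-1\bmod4$ and $-2\equiv2\bmod4$. The parity constraint is instead needed to guarantee that an epimorphism $\varphi$ exists at all (and it is automatically satisfied when $b\equiv2+\sum b_i\bmod4$).
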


\section{Free involutions on manifolds with Nil geometry, and their $\Z_2$-index}\label{sec:Free involution and 2-index}

This section
is a mere synthesis of the two previous ones, with a sorting by $M$ (instead of $N$) of the double coverings $M\to N$. When we talk about the number of free involutions on $M$, it will be up to equivalence, i.e. up to conjugation by an automorphism of $M$.

\begin{theo}\label{thm:dekimO}  
The  manifolds $M_\mathrm{22}(b)$, $M_\mathrm{244}(b,1,3)$ and $M_\mathrm{236}(b,b_2,b_3)$ do not support any free involution.
\end{theo}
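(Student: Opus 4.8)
The plan is to show that none of the manifolds $M_\mathrm{22}(b)$, $M_\mathrm{244}(b,1,3)$, $M_\mathrm{236}(b,b_2,b_3)$ appears as the total space of any of the double coverings listed in Propositions \ref{prop:M1Cover}--\ref{prop:M6Cover}. Since a free involution $\tau$ on a closed manifold $M$ is the same datum as a double covering $M\to M/\tau$, and since the family of Nil manifolds is closed under $2$-quotients (so $M/\tau$ again belongs to the table), it suffices to scan the right-hand sides of the seven propositions and check that $M_\mathrm{22}$, $M_\mathrm{244}(\cdot,1,3)$ and $M_\mathrm{236}$ never occur among the listed total spaces.

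First I would isolate the total spaces that appear in Propositions \ref{prop:M1Cover}--\ref{prop:M6Cover}: the classes $\mathrm{T}$, $\mathrm{K}$, $\mathrm{2222}$, $\mathrm{333}$, $\mathrm{244}(\cdot,b_3,b_3)$ (with equal last two parameters, from Proposition \ref{prop:M5Cover}), and $\mathrm{244}(\cdot,b_2,b_2)$ — but never $\mathrm{244}(\cdot,1,3)$, never $\mathrm{22}$, and never $\mathrm{236}$. For the class $\mathrm{22}$: it has $g'=1$, $\varepsilon=-1$, which among Nil manifolds is characteristic of that class alone, so $M_\mathrm{22}(b)$ could only arise as a cover of type $\mathrm{22}$; but Proposition \ref{prop:M4Cover} shows the only covers of $M_\mathrm{22}$ are of classes $\mathrm{K}$ and $\mathrm{2222}$, so no $M_\mathrm{22}(b')$ is a double cover of any Nil manifold, hence it supports no free involution. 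For $M_\mathrm{236}(b,b_2,b_3)$: the class $\mathrm{236}$ is the unique Nil class with critical-fibre orders $\{2,3,6\}$; Proposition \ref{prop:M7Cover} shows its covers are of class $\mathrm{333}$, and no other proposition produces a total space of class $\mathrm{236}$, so again it has no free involution. For $M_\mathrm{244}(b,1,3)$: the class $\mathrm{244}$ has orders $\{2,4,4\}$, and the only proposition producing class-$\mathrm{244}$ total spaces is Proposition \ref{prop:M5Cover}, whose total spaces are always $M_\mathrm{244}(\cdot,b_i,b_i)$ with the two "$4$"-parameters equal; since $(1,3)$ are unequal, $M_\mathrm{244}(b,1,3)$ is never such a total space, hence supports no free involution.

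The one point requiring care — and the main (mild) obstacle — is justifying that a presumptive cover's class is forced by the data $(g',\varepsilon,\{a_i\})$ together with Remark \ref{rem:multiset}: two Nil manifolds with the same multiset of $a_i$'s, the same $g'$ and the same $\varepsilon$ lie in the same row of the table, and within a row the only freedom is in the $b_i$'s. Thus the argument reduces, for each of the three manifolds, to reading off from the table that its row-type never appears on the right of any of the seven propositions (for $M_\mathrm{244}(b,1,3)$ one additionally checks the $b_i$-pattern, not just the row). I would present this as a short case inspection referring back to the table in Section \ref{sec:Manifolds with Nil geometry} and to Propositions \ref{prop:M4Cover}, \ref{prop:M7Cover}, \ref{prop:M5Cover}, concluding that in each case the list of possible total spaces that are $2$-quotients of Nil manifolds omits the manifold in question, so it admits no free involution.
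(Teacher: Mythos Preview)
Your approach is correct and coincides with the paper's: Section~\ref{sec:Free involution and 2-index} is explicitly declared to be ``a mere synthesis'' of the preceding sections, reindexed by the total space $M$, so Theorem~\ref{thm:dekimO} is proved exactly by observing that $M_\mathrm{22}$, $M_\mathrm{236}$ and $M_\mathrm{244}(\cdot,1,3)$ never occur among the total spaces listed in Propositions~\ref{prop:M1Cover}--\ref{prop:M6Cover}. Your opening inventory of the total spaces appearing in those propositions already constitutes the full proof.

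One caution: in your case-by-case elaboration for $M_\mathrm{22}$, the inference ``it has $g'=1$, $\varepsilon=-1$, \dots\ so $M_\mathrm{22}(b)$ could only arise as a cover of type $\mathrm{22}$'' is not justified. The base-orbifold invariants $(g',\varepsilon)$ of a double cover need not match those of the base (indeed Proposition~\ref{prop:M4Cover} itself shows $M_\mathrm K$, with $g'=2$, covering $M_\mathrm{22}$, with $g'=1$). The conclusion for $M_\mathrm{22}$ is still correct, but only because of the direct inspection you already carried out in the previous sentence; drop the spurious intermediate claim and simply cite the exhaustive list of total spaces, as you correctly do for $M_\mathrm{236}$ and $M_\mathrm{244}(\cdot,1,3)$.
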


\begin{nota}
A figure
$$M\xrightarrow{\ i\ }N$$
will mean that $M\to N$ is a double covering with $\Z_2$-index $i$.
\end{nota}

\begin{theo}\label{thm:M1Invol}
\begin{itemize}
\item If $b$ is odd, $M_{\mathrm{T}}(b)$ is the double covering of one manifold, with $\Z_2$-index $3$:
$$\xymatrix @R=0.75pc{
M_{\mathrm{T}}(b),b\text{ odd}\ar[dd]^3\\
\\
M_{\mathrm{T}}(2b).}$$
\item If $b$ is even, $M_{\mathrm{T}}(b)$ is the double covering of four manifolds (two with $\Z_2$-index $1$ and two with $\Z_2$-index $2$):
$$\xymatrix @R=0.75pc @C=0.75pc{
&&&M_{\mathrm{T}}(b),b\text{ even}\ar[ddlll]_2\ar[ddl]^1\ar[dd]^2\ar[ddr]^1\\
\\
M_{\mathrm{T}}(2b)&&M_{\mathrm{T}}(b/2)&M_\mathrm{2222}(b/2-2)&M_\mathrm{K}(b/2).}$$
\end{itemize}
\end{theo}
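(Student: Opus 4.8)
The plan is to assemble Theorem \ref{thm:M1Invol} entirely from the machinery built in the previous two sections, so that no new geometric or algebraic work is needed---only a careful bookkeeping of which double coverings have $M_{\mathrm{T}}(b)$ as total space, together with the $\Z_2$-index of each. First I would invoke Proposition \ref{prop:M1Cover}, Proposition \ref{prop:M3Cover}, Proposition \ref{prop:M2Cover} and Proposition \ref{prop:M5Cover} in the ``reverse'' direction: scanning the right-hand sides of every double covering $M\to N$ listed there, I collect exactly those whose total space is of class $\mathrm{T}$. From Proposition \ref{prop:M1Cover} we get $M_{\mathrm{T}}(2b)\to M_{\mathrm{T}}(b)$ (always) and $M_{\mathrm{T}}(b/2)\to M_{\mathrm{T}}(b)$ (for $b$ even); from Proposition \ref{prop:M3Cover}, the covering $M_{\mathrm{T}}(2b)\to M_\mathrm{K}(b)$; from Proposition \ref{prop:M2Cover}, the covering $M_{\mathrm{T}}(2b+4)\to M_\mathrm{2222}(b)$. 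A quick check of Proposition \ref{prop:M4Cover}, Proposition \ref{prop:M7Cover} and Proposition \ref{prop:M6Cover} shows none of those produce a class-$\mathrm{T}$ total space, so the list is complete.

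Next I would re-parametrise each such covering so that the \emph{total} space reads $M_{\mathrm{T}}(b)$ and record the corresponding base: $M_{\mathrm{T}}(2b)\to M_{\mathrm{T}}(b)$ becomes the covering of $M_{\mathrm{T}}(b)$ over $M_{\mathrm{T}}(b/2)$, valid when $b$ is even; $M_{\mathrm{T}}(b/2)\to M_{\mathrm{T}}(b)$ becomes the covering of $M_{\mathrm{T}}(b)$ over $M_{\mathrm{T}}(2b)$, valid for all $b\in\N^*$; $M_{\mathrm{T}}(2b)\to M_\mathrm{K}(b)$ becomes the covering of $M_{\mathrm{T}}(b)$ over $M_\mathrm{K}(b/2)$, valid when $b$ is even; and $M_{\mathrm{T}}(2b+4)\to M_\mathrm{2222}(b)$ becomes the covering of $M_{\mathrm{T}}(b)$ over $M_\mathrm{2222}(b/2-2)$, again valid when $b$ is even (and $b/2-2\ge-1$, which holds since $b\ge1$ even forces $b\ge2$). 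Thus for $b$ odd there is exactly one base, $M_{\mathrm{T}}(2b)$, and for $b$ even there are exactly four bases, $M_{\mathrm{T}}(2b)$, $M_{\mathrm{T}}(b/2)$, $M_\mathrm{2222}(b/2-2)$ and $M_\mathrm{K}(b/2)$, matching the two diagrams in the statement. One must be slightly careful about the edge case $b=2$ for $M_\mathrm{2222}(b/2-2)=M_\mathrm{2222}(-1)$, which is still an admissible value since $b_{\text{min}}=-1$ for class $\mathrm{2222}$.

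Finally I would read off the $\Z_2$-index of each of these four (resp.\ one) coverings from Corollary \ref{cor:Ind1} and Corollary \ref{cor:Ind3}, using Definition \ref{defi:cd} and the $c$-values in the table. For $M_{\mathrm{T}}(b)\to M_{\mathrm{T}}(2b)$: here $N=M_{\mathrm{T}}(2b)$ has $c=2b$ and $\varphi(h)=1$ (since the covering of Proposition \ref{prop:M1Cover} with $M_{\mathrm{T}}(b/2)$ total space is the one with $\varphi(h)=1$); $c=2b\equiv2\bmod4$ exactly when $b$ is odd, so the index is $3$ for $b$ odd and (by elimination, since Corollary \ref{cor:Ind1} does not apply when $\varphi(h)=1$) the index is $2$ for $b$ even. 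For $M_{\mathrm{T}}(b)\to M_{\mathrm{T}}(b/2)$ ($b$ even): $N=M_{\mathrm{T}}(b/2)$, and this is the covering of Proposition \ref{prop:M1Cover} with $\varphi(h)=0$, so by Corollary \ref{cor:Ind1} the index is $1$. For $M_{\mathrm{T}}(b)\to M_\mathrm{K}(b/2)$ ($b$ even): by Proposition \ref{prop:M3Cover} this is the covering with $\varphi(h)=\varphi(v_1+v_2)=0$, i.e.\ $(\varphi(v_1),\varphi(v_2))=(0,0)$ or $(1,1)$; the relevant class-$\mathrm{K}$ epimorphism here has $(\varphi(v_1),\varphi(v_2))=(1,1)$ (the only nonzero option with $\varphi(h)=\varphi(v_1+v_2)=0$), so Corollary \ref{cor:Ind1} gives index $1$. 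For $M_{\mathrm{T}}(b)\to M_\mathrm{2222}(b/2-2)$ ($b$ even): by Proposition \ref{prop:M2Cover} this is the covering with $\varphi$ sending all $s_i$ to $1$; here $d=4>0$ and $\sum\varphi(s_j)a_j/2 = 4\cdot 1 = 4\equiv0\bmod2$, so Corollary \ref{cor:Ind3} fails and Corollary \ref{cor:Ind1} does not apply (class $\mathrm{2222}$, not $\mathrm{T}$ or $\mathrm{K}$), hence the index is $2$. Collecting: for $b$ odd, one covering of index $3$; for $b$ even, the covering over $M_{\mathrm{T}}(2b)$ has index $2$, over $M_\mathrm{2222}(b/2-2)$ index $2$, and over $M_{\mathrm{T}}(b/2)$ and $M_\mathrm{K}(b/2)$ index $1$---exactly the two diagrams claimed. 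The main obstacle is purely organisational: one must be scrupulous about which epimorphism (equivalence class) sits underneath each total-space description when passing from the propositions of Section \ref{sec:Double coverings of each manifold} to the index criteria, since the value of $\varphi(h)$ and of $\varphi(s_i)$ is exactly what distinguishes index $1$ from $2$ from $3$, and a sign or parity slip in re-parametrising $b\mapsto 2b$ versus $b\mapsto b/2$ would corrupt the $c$-value and hence the $\bmod 4$ condition.
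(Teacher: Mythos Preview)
Your proposal is correct and follows exactly the approach the paper intends: Section~\ref{sec:Free involution and 2-index} is explicitly described as ``a mere synthesis of the two previous ones, with a sorting by $M$ (instead of $N$),'' and you carry out precisely that synthesis---collecting from Propositions~\ref{prop:M1Cover}, \ref{prop:M3Cover} and \ref{prop:M2Cover} the coverings whose total space is of class $\mathrm{T}$, re-parametrising, and reading off the indices from Corollaries~\ref{cor:Ind1} and \ref{cor:Ind3}. One cosmetic slip: you name Proposition~\ref{prop:M5Cover} among the propositions to scan but never return to it; it should simply join Propositions~\ref{prop:M4Cover}, \ref{prop:M7Cover}, \ref{prop:M6Cover} in the list of those producing no class-$\mathrm{T}$ total space.
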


\begin{theo}\label{thm:M3Invol}
\begin{itemize}
\item If $b$ is odd, $M_\mathrm{K}(b)$ is the double covering of one manifold, with $\Z_2$-index $3$:
$$\xymatrix @R=0.75pc{
M_\mathrm{K}(b),b\text{ odd}\ar[dd]^3\\
\\
M_\mathrm{K}(2b).}$$
\item If $b$ is even, $M_\mathrm{K}(b)$ is the double covering of three manifolds, all with $\Z_2$-index $2$:
$$\xymatrix @R=0.75pc @C=0.75pc{
&M_\mathrm{K}(b),b\text{ even}\ar[ddl]_2\ar[dd]^2\ar[ddr]^2\\
\\
M_\mathrm{K}(2b)&M_\mathrm{K}(b/2)&M_\mathrm{22}(b/2-1).}$$
\end{itemize}
\end{theo}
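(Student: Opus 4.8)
The plan is to obtain Theorem \ref{thm:M3Invol} as a bookkeeping synthesis of Sections \ref{sec:Double coverings of each manifold} and \ref{sec:BU for each 2-covering}. Since the family of Nil manifolds is closed under $2$-quotients, every free involution $\tau$ on $M_\mathrm{K}(b)$ produces a double covering $M_\mathrm{K}(b)\to M_\mathrm{K}(b)/\tau$ which is one of those listed in Propositions \ref{prop:M1Cover} to \ref{prop:M6Cover}, and conversely; moreover equivalence classes of free involutions on $M_\mathrm{K}(b)$ correspond bijectively to equivalence classes of such coverings. So the first step is to scan Propositions \ref{prop:M1Cover} to \ref{prop:M6Cover} and extract precisely the coverings whose \emph{total space} is $M_\mathrm{K}(b)$. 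Inspecting the right-hand sides, the class $\mathrm{K}$ occurs as a total space only in Propositions \ref{prop:M3Cover} and \ref{prop:M4Cover}. From Proposition \ref{prop:M3Cover} one reads off, for every $b\in\N^*$, the covering $M_\mathrm{K}(b)\to M_\mathrm{K}(2b)$ (the ``$M_\mathrm{K}(\beta/2)\to M_\mathrm{K}(\beta)$'' case with $\beta=2b$, always admissible since $2b$ is even); and, when $b$ is even, the covering $M_\mathrm{K}(b)\to M_\mathrm{K}(b/2)$ (the ``$M_\mathrm{K}(2b')\to M_\mathrm{K}(b')$'' case with $b'=b/2$). From Proposition \ref{prop:M4Cover}, when $b$ is even (so $b/2-1\in\N$) one gets $M_\mathrm{K}(b)\to M_\mathrm{22}(b/2-1)$ (the ``$M_\mathrm{K}(2b'+2)\to M_\mathrm{22}(b')$'' case with $b'=b/2-1$). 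These are all of them, which already produces the counts: one base manifold if $b$ is odd, and if $b$ is even three pairwise non-homeomorphic base manifolds (the two $\mathrm{K}$-manifolds are distinct since $2b\ne b/2$ for $b>0$, and class $\mathrm{22}$ differs from class $\mathrm{K}$ by its base orbifold), each carrying a single equivalence class of characteristic class by the cited propositions.

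The second step is to compute the $\Z_2$-index of each of the (at most three) coverings found, using Corollary \ref{cor:Ind1} and Corollary \ref{cor:Ind3}; for this one must record the characteristic class $\varphi$ supplied by Section \ref{sec:Double coverings of each manifold}. For $M_\mathrm{K}(b)\to M_\mathrm{K}(2b)$, $\varphi$ is one of the ``four other'' epimorphisms of $M_\mathrm{K}(2b)$, hence $\varphi(h)=1$; Corollary \ref{cor:Ind1} then excludes index $1$, while Corollary \ref{cor:Ind3} gives index $3$ exactly when $2b\equiv2\bmod4$, i.e. when $b$ is odd, and index $2$ otherwise. For $M_\mathrm{K}(b)\to M_\mathrm{K}(b/2)$ ($b$ even), $\varphi$ satisfies $\varphi(h)=0$ and $\varphi(v_1+v_2)=1$; the case $(\varphi(v_1),\varphi(v_2))=(1,1)$ of Corollary \ref{cor:Ind1} would force $\varphi(v_1+v_2)=0$, so index $1$ is excluded, and Corollary \ref{cor:Ind3} is not triggered since it requires $\varphi(h)=1$, hence the index is $2$. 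For $M_\mathrm{K}(b)\to M_\mathrm{22}(b/2-1)$ ($b$ even), the base is of class $\mathrm{22}$, which appears in neither Corollary \ref{cor:Ind1} nor Corollary \ref{cor:Ind3}, so the index is again $2$. Combining the list of coverings with these three index computations yields exactly the two diagrams in the statement.

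The only real care is in the first step: verifying that no covering in Propositions \ref{prop:M1Cover} to \ref{prop:M6Cover} other than the ones noted has total space $M_\mathrm{K}(b)$, and — for the ``number of free involutions'' count — checking that the three base manifolds are genuinely non-homeomorphic and that for each of them the relevant epimorphism is unique up to the equivalence of Corollary \ref{cor:equiv}, so that distinct arrows in the diagram really are inequivalent involutions. The index computations themselves are then immediate once $\varphi(h)$ (and, in the $M_\mathrm{K}(b/2)$ case, $\varphi(v_1+v_2)$) has been read off from Section \ref{sec:Double coverings of each manifold}.
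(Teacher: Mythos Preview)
Your proposal is correct and follows exactly the paper's own approach: Section \ref{sec:Free involution and 2-index} is described as ``a mere synthesis of the two previous ones, with a sorting by $M$ (instead of $N$),'' and your bookkeeping---extracting from Propositions \ref{prop:M3Cover} and \ref{prop:M4Cover} the coverings with total space $M_\mathrm{K}(b)$, then reading off the indices via Corollaries \ref{cor:Ind1} and \ref{cor:Ind3}---is precisely that synthesis, carried out carefully and with the right checks on equivalence classes and distinctness of the base manifolds.
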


\begin{theo}\label{thm:M2Invol}
\begin{itemize}
\item If $b$ is odd, $M_\mathrm{2222}(b)$ is the double covering of one manifold, with $\Z_2$-index $2$:
$$\xymatrix @R=0.75pc{
M_\mathrm{2222}(b),b\text{ odd}\ar[dd]^2\\
\\
M_\mathrm{244}((b-1)/2,1,3).}$$
\item If $b$ is even, $M_\mathrm{2222}(b)$ is the double covering of four manifolds, all with $\Z_2$-index $2$:
$$\xymatrix @R=0.75pc @C=0.75pc{
&&&M_\mathrm{2222}(b),b\text{ even}\ar[ddlll]_2\ar[ddl]^2\ar[dd]^2\ar[ddr]^2\\
\\
M_\mathrm{2222}(b/2-1)&&M_\mathrm{22}(b/2)&M_\mathrm{244}(b/2-1,3,3)& M_\mathrm{244}(b/2,1,1).}$$
\end{itemize}
\end{theo}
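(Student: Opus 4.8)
The plan is purely one of synthesis: as announced at the start of this section, Theorem \ref{thm:M2Invol} only resorts by total space the double-covering data of Section \ref{sec:Double coverings of each manifold}, equipped with the $\Z_2$-indices of Section \ref{sec:BU for each 2-covering}. Concretely, I would first scan the seven propositions of Section \ref{sec:Double coverings of each manifold} for every double covering whose total space is of class $\mathrm{2222}$. By Propositions \ref{prop:M1Cover}, \ref{prop:M3Cover}, \ref{prop:M7Cover} and \ref{prop:M6Cover}, no covering of a manifold of class $\mathrm T$, $\mathrm K$, $\mathrm{236}$ or $\mathrm{333}$ has such a total space, so exactly three families contribute: $M_\mathrm{2222}(2b_N+2)\to M_\mathrm{2222}(b_N)$ from Proposition \ref{prop:M2Cover}, $M_\mathrm{2222}(2b_N)\to M_\mathrm{22}(b_N)$ from Proposition \ref{prop:M4Cover}, and the sub-case $\varphi(s_1)=0$ of Proposition \ref{prop:M5Cover}, namely $M_\mathrm{2222}(2b_N-1+(b_2+b_3)/2)\to M_\mathrm{244}(b_N,b_2,b_3)$ with $b_2,b_3\in\{1,3\}$.

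Next I would solve each relation for the base parameter in terms of the total-space parameter $b$, keeping track of the parity it forces on $b$. The first family gives $b=2b_N+2$ (even) with base $M_\mathrm{2222}(b/2-1)$; the second gives $b=2b_N$ (even) with base $M_\mathrm{22}(b/2)$; and in the third, where by Remark \ref{rem:multiset} only the multiset $\{b_2,b_3\}$ matters, $\{1,1\}$ gives $b=2b_N$ (even) with base $M_\mathrm{244}(b/2,1,1)$, $\{3,3\}$ gives $b=2b_N+2$ (even) with base $M_\mathrm{244}(b/2-1,3,3)$, and $\{1,3\}$ gives $b=2b_N+1$ (odd) with base $M_\mathrm{244}((b-1)/2,1,3)$. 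Comparing with the $b_{\text{min}}$ column of the table in Section \ref{sec:Manifolds with Nil geometry} shows all these base invariants are admissible whenever $b\ge-1$. Thus for $b$ odd the unique base is $M_\mathrm{244}((b-1)/2,1,3)$, and for $b$ even the four bases are precisely those of the second diagram; as they carry pairwise distinct Seifert invariants, the corresponding free involutions are pairwise inequivalent.

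Finally I would read off the $\Z_2$-indices from Corollaries \ref{cor:Ind1} and \ref{cor:Ind3}. None of the five bases is of class $\mathrm T$ or $\mathrm K$, so by Corollary \ref{cor:Ind1} no covering has index $1$. By Corollary \ref{cor:Ind3}, index $3$ requires a base of class $\mathrm T$, $\mathrm K$ or $\mathrm{333}$, or a class-$\mathrm{244}$ base with $\varphi(s_1)=1$; but our two class-$\mathrm{244}$ coverings are exactly the ones with $\varphi(s_1)=0$, and classes $\mathrm{2222}$ and $\mathrm{22}$ do not appear in that corollary, so no covering has index $3$ either. Hence every arrow carries index $2$, giving the two diagrams. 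The one point demanding care is the exhaustiveness of the first step combined with the correct handling of Remark \ref{rem:multiset}, so that the pair $(1,3)$ is not counted twice and no $M_\mathrm{2222}$ total space is overlooked; the rest is bookkeeping.
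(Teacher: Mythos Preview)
Your proposal is correct and follows exactly the approach the paper takes: as the opening sentence of Section \ref{sec:Free involution and 2-index} says, these theorems are ``a mere synthesis of the two previous ones, with a sorting by $M$ (instead of $N$)'', and you have simply written out that bookkeeping explicitly and accurately.
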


\begin{theo}\label{thm:M5Invol}
$M_\mathrm{244}(b,x,x)$ ($x\in\{1,3\}$) is the double covering of one manifold, with $\Z_2$-index $3$:
$$\xymatrix @R=0.75pc{
M_\mathrm{244}(b,x,x),b\text{ odd}\ar[dd]^3\\
\\
M_\mathrm{244}((b-1)/2,1,x)}
\quad\quad
\xymatrix @R=0.75pc{
M_\mathrm{244}(b,x,x),b\text{ even}\ar[dd]^3\\
\\
 M_\mathrm{244}(b/2-1,x,3).}$$
\end{theo}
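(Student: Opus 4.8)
\textbf{Proof plan for Theorem \ref{thm:M5Invol}.}

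The plan is to reorganize the information already gathered about double coverings with base $M_\mathrm{244}$, sorting it by total space rather than by base. By Proposition \ref{prop:M5Cover}, a manifold of class $\mathrm{244}$ arises as the total space of a double covering in exactly two ways among the three cases listed there: as $M_\mathrm{244}(2b+(b_2+1)/2,b_3,b_3)$ over $M_\mathrm{244}(b,b_2,b_3)$ (the case $\varphi(s_3)=0$), and as $M_\mathrm{244}(2b+(b_3+1)/2,b_2,b_2)$ over $M_\mathrm{244}(b,b_2,b_3)$ (the case $\varphi(s_2)=0$); the third case ($\varphi(s_1)=0$) produces a total space of class $\mathrm{2222}$, not $\mathrm{244}$, so it is irrelevant here. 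First I would fix the target form $M_\mathrm{244}(B,x,x)$ with $x\in\{1,3\}$ and solve, for each of the two relevant cases and each parity of $B$, the Diophantine constraint identifying $(b,b_2,b_3)$; e.g. from $B=2b+(b_2+1)/2$ with $b_3=x$ one gets $b_2\in\{1,3\}$ forced by parity of $B$ and then $b$ determined, yielding precisely $M_\mathrm{244}((B-1)/2,1,x)$ when $B$ is odd and $M_\mathrm{244}(B/2-1,x,3)$ when $B$ is even, and the other case is checked to give the same answer (consistent with the fact that $M_\mathrm{244}(B,x,x)$ is symmetric in its last two arguments).

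The second ingredient is the $\Z_2$-index. By Corollary \ref{cor:Ind3}, a non-trivial double covering $N_\varphi\to N$ with $N$ of class $\mathrm{244}$ has $\Z_2$-index $3$ precisely when $\varphi(s_1)=1$; since the coverings producing a total space of class $\mathrm{244}$ are exactly those with $\varphi(s_2)=0$ or $\varphi(s_3)=0$, and the three epimorphisms of Proposition \ref{prop:M5Cover} send $s_1+s_2+s_3$ to $0$, in both relevant cases $\varphi(s_1)=1$ automatically. Hence the index is $3$ in every case, which also rules out index $1$ (alternatively, Corollary \ref{cor:Ind1} only lists classes $\mathrm{T}$ and $\mathrm{K}$) and index $2$. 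It remains to verify uniqueness up to equivalence: by the symmetry $b_2\leftrightarrow b_3$ of the target (Remark \ref{rem:multiset} applied to the two fibres of invariant $(4,x)$), the two a priori distinct coverings over $M_\mathrm{244}(B,x,x)$ are equivalent, so there is really only one free involution on $M_\mathrm{244}(B,x,x)$ up to equivalence.

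The main obstacle I anticipate is purely bookkeeping: keeping the parity conditions, the ranges $b\ge b_{\text{min}}$, and the reindexing straight so that the two descriptions genuinely coincide and no spurious or missing covering slips through; in particular one must double-check that for $B$ even the base $M_\mathrm{244}(B/2-1,x,3)$ indeed satisfies $b\ge b_{\text{min}}$ for all admissible $B$, and that $(B/2-1,x,3)$ rather than $(B/2-1,3,x)$ is the correct normal form. Once the solve-and-match is done carefully, the index computation is immediate from Corollary \ref{cor:Ind3}, and the theorem follows by assembling these two pieces and invoking the equivalence from Remark \ref{rem:multiset}.
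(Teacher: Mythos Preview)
Your overall approach is correct and matches the paper's: Theorem \ref{thm:M5Invol} is, as announced at the start of Section \ref{sec:Free involution and 2-index}, a reindexing of the double coverings found in Section \ref{sec:Double coverings of each manifold}, sorted by total space rather than by base, combined with the index computations of Section \ref{sec:BU for each 2-covering}. Your solve-and-match against Proposition \ref{prop:M5Cover} and your invocation of Corollary \ref{cor:Ind3} for the index are exactly what is needed.

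Two points deserve tightening. First, you restrict attention to bases of class $\mathrm{244}$ without justification; you should note explicitly (by inspection of Propositions \ref{prop:M1Cover} to \ref{prop:M6Cover}) that no base of any other class produces a total space of class $\mathrm{244}$. Second, your uniqueness argument is imprecise: you invoke the $s_2\leftrightarrow s_3$ symmetry of the \emph{total space} $M_{\mathrm{244}}(B,x,x)$, but equivalence of free involutions is decided at the level of the \emph{bases} and their characteristic classes. The correct reasoning is already built into Proposition \ref{prop:M5Cover}: when the base has $b_2=b_3$, the two epimorphisms with $\varphi(s_1)=1$ are declared equivalent there (via Corollary \ref{cor:equiv}(2), a symmetry of the base, not of the total space); and when the base has $(b_2,b_3)=(1,3)$, those two epimorphisms yield \emph{different} total spaces, namely $M_{\mathrm{244}}(2b+1,3,3)$ and $M_{\mathrm{244}}(2b+2,1,1)$, so no collision occurs. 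Once you enumerate only normal-form bases, each pair $(B,x)$ is hit exactly once and uniqueness is automatic.
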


\begin{theo}\label{thm:M6Invol}
Up to reordering if $(x,y)=(2,1)$, i.e. replacement of $(2,2,1)$ by $(1,2,2)$ and of $(2,1,1)$ by $(1,1,2)$:
\begin{itemize}
\item if $b-y$ is odd, $M_\mathrm{333}(b,x,x,y)$ is the double covering of one manifold, with $\Z_2$-index $3$:
$$\xymatrix @R=0.75pc{
M_\mathrm{333}(b,x,x,y),b-y\text{ odd}\ar[dd]^3\\
\\
M_\mathrm{333}(2b+2x+y-3,3-y,3-x,3-x);}$$
\item if $b-y$ is even, $M_\mathrm{333}(b,x,x,y)$ is the double covering of two manifolds, both with $\Z_2$-index $2$:
$$\xymatrix @R=0.75pc @C=-2pc{
&M_\mathrm{333}(b,x,x,y),b-y\text{ even}\ar[ddl]_2\ar[ddr]^2\\
\\
M_\mathrm{333}(2b+2x+y-3,3-y,3-x,3-x)&&M_\mathrm{236}\left((b-y)/2,x,4y-3\right).}$$
\end{itemize}

\end{theo}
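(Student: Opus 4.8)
The plan is to treat this theorem as a bookkeeping step: Section~\ref{sec:Free involution and 2-index} only reorganizes, indexed by the total space instead of by the base, the content of Propositions~\ref{prop:M1Cover}--\ref{prop:M6Cover} together with Corollaries~\ref{cor:Ind1} and~\ref{cor:Ind3}. First I would record that, since the Seifert parameters $b_i$ of a manifold of class $\mathrm{333}$ all lie in $\{1,2\}$, the multiset $\{b_1,b_2,b_3\}$ is one of $\{1,1,1\},\{1,1,2\},\{1,2,2\},\{2,2,2\}$; by Remark~\ref{rem:multiset}, every such manifold is, up to reordering, of the form $M_\mathrm{333}(b,x,x,y)$ with $x,y\in\{1,2\}$, so the statement does exhaust the class.

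Next I would scan the seven propositions of Section~\ref{sec:Double coverings of each manifold} for occurrences of a class-$\mathrm{333}$ manifold as the \emph{total} space of a double covering; this happens only in Proposition~\ref{prop:M6Cover} (base again of class $\mathrm{333}$) and in Proposition~\ref{prop:M7Cover} (base of class $\mathrm{236}$). For Proposition~\ref{prop:M6Cover}, I would solve $M_\mathrm{333}((\beta+c_1+c_2+c_3-6)/2,3-c_3,3-c_2,3-c_1)=M_\mathrm{333}(b,x,x,y)$ for the base: since $c\mapsto3-c$ permutes $\{1,2\}$, this forces $\{c_1,c_2,c_3\}=\{3-x,3-x,3-y\}$, hence $c_1+c_2+c_3=9-2x-y$ and $\beta=2b+2x+y-3$, giving the base $M_\mathrm{333}(2b+2x+y-3,3-y,3-x,3-x)$, where $(3-y,3-x,3-x)=(2,1,1)$ must be reordered to $(1,1,2)$ exactly when $(x,y)=(2,1)$; moreover the parity hypothesis of Proposition~\ref{prop:M6Cover} is automatic here, because $\beta+c_1+c_2+c_3=2(b+3)$, so this covering always exists and is unique. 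For Proposition~\ref{prop:M7Cover}, the equation $M_\mathrm{333}(2\beta+(b_3+3)/4,b_2,b_2,(b_3+3)/4)=M_\mathrm{333}(b,x,x,y)$ (up to reordering) forces $b_2=x$ and $(b_3+3)/4=y$, i.e. $b_3=4y-3\in\{1,5\}$, together with $\beta=(b-y)/2$; hence this covering exists iff $b-y$ is even, and is then the unique one over $M_\mathrm{236}((b-y)/2,x,4y-3)$, with the reordering $(2,2,1)\mapsto(1,2,2)$ of the total-space invariant (that is, of the left-hand $M_\mathrm{333}(b,x,x,y)$) when $(x,y)=(2,1)$. Since each proposition pins down a unique base and a unique characteristic class, and the two candidate orbit spaces (of class $\mathrm{333}$ and of class $\mathrm{236}$, hence with distinct base orbifolds of their---unique---Seifert fibration) are not homeomorphic, $M_\mathrm{333}(b,x,x,y)$ carries exactly one free involution when $b-y$ is odd and exactly two when $b-y$ is even.

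It then remains to read off the indices from Corollaries~\ref{cor:Ind1} and~\ref{cor:Ind3}. Neither base is of class $\mathrm{T}$ or $\mathrm{K}$, so by Corollary~\ref{cor:Ind1} none of these coverings has index $1$. The covering over $M_\mathrm{236}((b-y)/2,x,4y-3)$ has a base of class $\mathrm{236}$, which does not appear in Corollary~\ref{cor:Ind3}; hence its index is not $3$, so it is $2$. The covering over $M_\mathrm{333}(2b+2x+y-3,3-y,3-x,3-x)$ has index $3$, by Corollary~\ref{cor:Ind3}, precisely when $2b+2x+y-3\equiv2+(9-2x-y)\bmod4$, a congruence that simplifies to $2(b+y+1)\equiv0\bmod4$, i.e. to ``$b-y$ odd''; otherwise its index is $2$. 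Combining, I get exactly the stated dichotomy: when $b-y$ is odd, one free involution, of index $3$; when $b-y$ is even, two free involutions, both of index $2$. The only delicate point---and the main (mild) obstacle---is to keep the normalization of the class-$\mathrm{333}$ Seifert triples coherent (whence the two $(x,y)=(2,1)$ reorderings) and to verify that, under these conventions, Propositions~\ref{prop:M6Cover} and~\ref{prop:M7Cover} genuinely yield distinct equivalence classes of pairs and each yields exactly one, so that the counts ``one'' and ``two'' are exact.
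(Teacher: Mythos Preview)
Your proposal is correct and follows precisely the approach the paper itself indicates: Section~\ref{sec:Free involution and 2-index} is announced as ``a mere synthesis of the two previous ones, with a sorting by $M$ (instead of $N$),'' and you carry out exactly that synthesis, inverting Propositions~\ref{prop:M7Cover} and~\ref{prop:M6Cover} to find the bases and then reading off the indices from Corollaries~\ref{cor:Ind1} and~\ref{cor:Ind3}. Your computations (in particular the parity check $2(b+y+1)\equiv0\bmod4$ for index~$3$ and the handling of the $(x,y)=(2,1)$ reorderings) are accurate and match the paper's conventions.
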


\author{Anne Bauval,}
\address{\small \'Equipe \'Emile Picard,\\
  Institut de Math\'ematiques de Toulouse (UMR 5219), Universit\'e Toulouse III\\
118 route de Narbonne, 31400 Toulouse - France\\
e-mail: bauval@math.univ-toulouse.fr}

\author{Daciberg L.\ Gon\c calves,}
\address{\small Departamento de Matem\'atica - IME-USP\\
Rua do Mat\~ao 1010  CEP: 05508-090 - S\~ao Paulo - SP - Brazil\\
e-mail: dlgoncal@ime.usp.br}

\author{Claude Hayat,}
\address{\small \'Equipe \'Emile Picard,\\
  Institut de Math\'ematiques de Toulouse (UMR 5219), Universit\'e Toulouse III\\
118 route de Narbonne, 31400 Toulouse - France\\
e-mail: hayat@math.univ-toulouse.fr}


\begin{thebibliography}{999}

\bibitem{aa}I. B\'ar\'any,  "A short proof of Kneser's conjecture'', {\it J. Comb. Th., Ser. A}, \textbf{25}, 1978, p. 325-326

\bibitem{BarGonVen}A. P. Barreto, D. L. Gon\c calves, D. Vendr\'uscolo, "Free involutions on torus semi-bundle and the Borsuk-Ulam theorem for maps into $\R^n$'', {\it Hiroshima Math. J.}, {\bf 46}(3), 2016, p. 255-270

\bibitem{acd}A. Bauval, D. L. Gon\c calves, C. Hayat, {\it The Borsuk-Ulam theorem for the Seifert manifolds having Flat geometry}, \href{https://arxiv.org/abs/1807.00159}{arXiv:1807.00159}

\bibitem{adcp1}A. Bauval,  D.  L. Gon\c calves, C. Hayat,  P. Zvengrowski, "The Borsuk-Ulam theorem for Double Coverings of
 Seifert manifolds'', {\it Proceedings of the Institute of Mathematics of the National Academy of Ukraine}, \textbf{6}(6): Brazilian-Polish Topology Workshop,  2013, p. 165-189

\bibitem{ac}A. Bauval, C. Hayat, {\it Reidemeister-Schreier's algorithm for double coverings of Seifert manifolds}, \href{https://arxiv.org/abs/1409.8587}{arXiv:1409.8587}

\bibitem{bh}A. Bauval, C. Hayat, "L'anneau de cohomologie des vari\'et\'es de Seifert non-orientables'', {\it Osaka J. Math.}, \textbf{54}, 2017, p. 157-195 (preprint: {\it L'anneau de cohomologie de toutes les vari\'et\'es de Seifert,} 2012, \href{https://arxiv.org/abs/1202.2818}{arxiv:1202.2818})
 
\bibitem{bo}K.  Borsuk, "Drei S\"atze \"uber die $n$-dimensionale Euklidische  Sph\"are'',  {\it Fund. Math.}, \textbf{20}, 1933, p. 177-190


\bibitem{CGLP}M. C. Crabb, D. L. Gon\c calves, A.  K.  M. Libardi,  P. L. Q. Pergher, "$\Z_2$-bordism and the Borsuk-Ulam theorem'', {\it Manuscripta Mathematica}, {\bf 150} (3-4), 2016, p. 371-381


\bibitem{Dekimpe}K.\ Dekimpe, {\it Almost-Bieberbach Groups: Affine and Polynomial Structures}, Lecture Notes in Math., vol. 1639, Springer-Verlag, New York, 1996

\bibitem{gon}D. L. Gon\c calves, "The Borsuk-Ulam theorem for surfaces'',  {\it Quaestiones Mathematicae}, \textbf {29}(1), 2006, p. 29-45

\bibitem{GonGua}D. L. Gon\c calves, J. Guaschi, "The Borsuk-Ulam theorem for maps into a surface'',  {\it Top.\ Appl.}, \textbf{157}, 2010, p. 1742-1759


\bibitem{ghz} D. L. Gon\c calves, C. Hayat,  P. Zvengrowski, "The Borsuk-Ulam theorem for manifolds, with applications to dimensions two and  three'',  {\it Proceedings of the International Conference Bratislava Topology Symposium (2009) ``Group Actions and Homogeneous Spaces"}, editors J. Korba\v s, M. Morimoto, K. Pawa\l owski. 

\bibitem{gro}M. Gromov, "Isoperimetry of waists and concentration of maps'',  {\it GAFA}, {\bf 13}, 2003, p. 178-215

\bibitem{Jaco}W. Jaco, {\it Lectures on Three-Manifold Topology}, CBMS Regional Conference Series in Mathematics, vol. 43, 1980

\bibitem{Johnson}D. L. Johnson, \href{https://books.google.fr/books?id=RYeIcopMH-IC&pg=PA78}{\it Presentations of Groups}, Cambridge University Press, 1997

\bibitem{lov}L. Lov\'asz,  Kneser's conjecture, chromatic number and homotopy,  {\it J. Comb. Th., Ser. A}, \textbf{25}, 1978, p. 319-324

\bibitem{ls}L. Lyusternik,  S.  Schnirel'man, {\it Topological methods in variational problems (in Russian)}, Issledowatelski\v \i \, Institut Matematiki i Mechaniki pri O.M.G.U., Moscow, 1930

\bibitem{mat}J. Matou\v sek, {\it Using the Borsuk-Ulam Theorem}, Universitext, Springer-Verlag, Berlin, Heidelberg, New York, 2002


\bibitem{ms} W. H. Meeks III, P. Scott, "Finite group actions on 3-manifolds'', {\it Invent. Math.}, {\bf 86}(2), 1986, p. 287-346


\bibitem{mem}Y. Memarian, "On Gromov's waist of the Sphere Theorem'',  {\it J. of Topology and Analysis}, {\bf 03}(1), 2013, p. 7-36

\bibitem{mus}O. R. Musin, "Borsuk-Ulam type theorems for manifolds'', {\it Proc. Amer. Math. Soc.}, {\bf 140}(7), 2012, p. 2551-2560

\bibitem{orlik}P. Orlik, {\it Seifert Manifolds}, Lecture Notes in Math., vol. 291, Springer-Verlag, New York, 1972.


\bibitem{scott}P. Scott, "The geometries of 3-manifolds'',   {\it Bull. London Math. Soc.}, \textbf{15}, 1983, p. 401-487

\bibitem{s} H. Seifert, "Topologie Dreidimensionaler Gefaserter R\"aume'', {\it Acta Math.}, \textbf{60}(1), 1933, p. 147-238


\bibitem{WW}F. Waldhausen, "On irreducible 3-manifolds which are sufficiently large'', {\it Annals of Math.}, {\bf 87}, 1968, p. 56-88


\bibitem{yang}C.-T. Yang,  "On theorems of Borsuk-Ulam, Kakutani-Yamabe-Yujob\o, and Dyson, I'', {\it Annals of Math.}, \textbf{60}, 1954, p. 262-282

\end{thebibliography}
\end{document}